\numberwithin{equation}{section}
\long\def\/*#1*/{}
\newcommand{\C}{\mathscr{C}}
\newcommand{\E}{\mathcal{E}}
\renewcommand{\L}{\mathcal{L}}
\newcommand{\R}{\mathbb{R}}
\newcommand{\W}{\mathbb{W}}
\newcommand{\loc}{{\rm loc}}
\newcommand{\dist}{{\mbox{\normalfont dist}}}
\newcommand{\bequ}{\begin{equation}}
\newcommand{\nequ}{\end{equation}}
\newcommand{\PV}{\mbox{\normalfont P.V.}}
\newcommand{\DG}{\mbox{\normalfont DG}^{s,p}}
\newcommand{\DGp}{\mbox{\normalfont DG}_+^{s,p}}
\newcommand{\DGm}{\mbox{\normalfont DG}_-^{s,p}}
\newcommand{\DGpm}{\mbox{\normalfont DG}_\pm^{s,p}}
\newcommand{\wDG}{\widetilde{\mbox{\normalfont DG}}{\mathstrut}^{s,p}}
\newcommand{\wDGp}{\widetilde{\mbox{\normalfont DG}}{\mathstrut}_+^{s,p}}
\newcommand{\wDGm}{\widetilde{\mbox{\normalfont DG}}{\mathstrut}_-^{s,p}}
\newcommand{\wDGpm}{\widetilde{\mbox{\normalfont DG}}{\mathstrut}_\pm^{s,p}}
\DeclareMathOperator{\supp}{supp}
\DeclareMathOperator{\Tail}{Tail}
\def\Xint#1{\mathchoice
{\XXint\displaystyle\textstyle{#1}}%
{\XXint\textstyle\scriptstyle{#1}}%
{\XXint\scriptstyle\scriptscriptstyle{#1}}%
{\XXint\scriptscriptstyle\scriptscriptstyle{#1}}%
\!\int}
\def\XXint#1#2#3{{\setbox0=\hbox{$#1{#2#3}{\int}$ }
\vcenter{\hbox{$#2#3$ }}\kern-.6\wd0}}
\def\dashint{\Xint-}
\theoremstyle{plain}
\newtheorem{definition}{Definition}[section]
\newtheorem{theorem}[definition]{Theorem}
\newtheorem{proposition}[definition]{Proposition}
\newtheorem{lemma}[definition]{Lemma}
\newtheorem{corollary}[definition]{Corollary}
\theoremstyle{definition}
\newtheorem{question}{Question}
\renewcommand{\le}{\leqslant}
\renewcommand{\ge}{\geqslant}
\def\Xint#1{\mathchoice
{\XXint\displaystyle\textstyle{#1}}%
{\XXint\textstyle\scriptstyle{#1}}%
{\XXint\scriptstyle\scriptscriptstyle{#1}}%
{\XXint\scriptscriptstyle\scriptscriptstyle{#1}}%
\!\int}
\def\XXint#1#2#3{{\setbox0=\hbox{$#1{#2#3}{\int}$ }
\vcenter{\hbox{$#2#3$ }}\kern-.6\wd0}}
\def\dashint{\Xint-}
\title[Fractional De Giorgi classes and applications to regularity]{Fractional De Giorgi classes and applications to nonlocal regularity theory}
\author{Matteo Cozzi}
\address{
\newline
\textit{Matteo Cozzi}
\newline
BGSMath Barcelona Graduate School of Mathematics \& Universitat Polit\`ecnica de Catalunya, Departament de Matem\`{a}tiques, Diagonal 647, 08028 Barcelona, Spain
\newline
\textit{E-mail address}: \textit{\tt matteo.cozzi@upc.edu}
}
\thanks{This note is mostly based on a talk given by the author at a conference held in Bari on May 29-30, 2017, as part of the INdAM intensive period~``Contemporary research in elliptic PDEs and related topics''. The author wishes to thank Serena Dipierro, the Universit\`a degli Studi di Bari, and INdAM for their kind invitation, warm hospitality, and financial support. The author is also supported by the~``Mar\'ia de Maeztu'' MINECO grant~MDM-2014-0445 and by the MINECO grant~MTM2017-84214-C2-1-P}
\subjclass[2010]{49N60, 35B45, 35B50, 35B65, 35R11, 47G20}
\keywords{Fractional~De~Giorgi classes, nonlocal Caccioppoli inequality, H\"older continuity, Harnack inequality, Nonlocal functionals, nonlinear integral operators}
\begin{document}

\begin{abstract}
We present some recent results obtained by the author on the regularity of solutions to nonlocal variational problems. In particular, we review the notion of fractional De Giorgi class, explain its role in nonlocal regularity theory, and propose some open questions in the subject.
\end{abstract}

\maketitle

\section{Introduction}

De Giorgi classes are a powerful tool in the regularity theory of Partial Differential Equations and Calculus of Variations. By definition, their elements are functions that belong to a Sobolev space and satisfy Caccioppoli inequalities at all of their levels. Their introduction can be dated back to the fundamental work of~De~Giorgi~\cite{DeG57}, where he devised them to prove the H\"older continuity of solutions to second order equations in divergence form with bounded measurable coefficients. Later on, Giaquinta \& Giusti~\cite{GG82} discovered that~De~Giorgi classes could also be utilized to prove H\"older estimates for minimizers of non-differentiable functionals, one of the first general regularity results that did not make use of the Euler-Lagrange equation. A couple of years later, DiBenedetto \& Trudinger~\cite{DT84} showed that~De~Giorgi classes are not only responsible for continuity properties, but also lead to Harnack inequalities. See the classical books~\cite{LU68,Giu03}, and the more recent~\cite{GM12} for additional information.

The aim of this work is to review and further enrich the theory developed by the author in~\cite{C17}, about fractional notions of~De~Giorgi classes and their applications to the regularity properties of solutions to nonlocal variational problems.

We consider the class~$\wDG$ and its subclass~$\DG$, both made up by functions that are contained in a Sobolev space of fractional order and satisfy a family of nonlocal Caccioppoli-type estimates. The inequality defining~$\wDG$ has a somewhat similar structure to that of standard~De~Giorgi classes and it is by now fairly understood, thanks to a number of contributions available in the literature, such as~\cite{DKP16,KMS15a,BP16}. On the other hand, the inequality that corresponds to the class~$\DG$ is stronger and incorporates a purely nonlocal term that has no classical counterpart. To the best of our knowledge, this last inequality has been previously considered only by Caffarelli, Chan \& Vasseur~\cite{CCV11} in a nonlocal parabolic context.

Throughout Section~\ref{DeGsec} we state several results pertaining to these classes. In particular, we establish that:
\begin{enumerate}[label=$(\alph*)$,leftmargin=*]
\item the elements of the class~$\wDG$ (and, therefore, of its smaller subset~$\DG$) are locally bounded functions---see Theorem~\ref{ulocboundthm};
\item \label{holdpoint} the functions of~$\DG$ are locally uniformly H\"older continuous---see Theorem~\ref{ulocHolderthm};
\item \label{harpoint} Harnack-type inequalities are true for functions that belong to~$\DG$ and that are non-negative---see Theorems~\ref{DGharthm} and~\ref{DGweakHarthm}.
\end{enumerate}
Furthermore, in Appendix~\ref{charexapp} we show by means of an explicit example that, for some choices of the parameters~$s$ and~$p$, the results of points~\ref{holdpoint} and~\ref{harpoint} cannot be extended to the larger class~$\wDG$.

Sections~\ref{minsec} and~\ref{solsec} are devoted to applications in the regularity theory for nonlocal variational problems. In Section~\ref{minsec} we deal with minimizers of energy functionals obtained as the sum of a possibly non-differentiable potential and of an interaction term comparable to the Gagliardo seminorm of a fractional Sobolev space. By showing that these extrema are contained in the fractional~De~Giorgi class~$\DG$, we deduce their H\"older continuity and the validity of Harnack inequalities, thanks to the statements of Section~\ref{DeGsec}. In Section~\ref{solsec} we approach in a similar way the regularity of solutions to equations driven by singular integral operators, such as fractional Laplacians and other nonlinear variations. These results complement and extend several available contributions, as for instance~\cite{Kas09,Sil06,DKP14,DKP16}.

\section{Fractional De Giorgi classes} \label{DeGsec}

We begin by introducing the larger set~$\wDG$, which we will sometimes call~\emph{weak fractional~De~Giorgi class}. To do this, we first need to fix some terminology.

Unless otherwise stated, throughout the whole paper~$n \ge 1$ is an integer indicating the dimension of the Euclidean space under consideration,~$s \in (0, 1)$ is a parameter representing a fractional order of differentiability, and~$p > 1$ is an integrability exponent. Also,~$\Omega$ always denotes a bounded open subset of the space~$\R^n$.

With the symbol~$W^{s, p}(\Omega)$ we denote the fractional Sobolev space composed by those functions~$u$ that lie in the Lebesgue space~$L^p(\Omega)$ and have finite Gagliardo seminorm
$$
[u]_{W^{s, p}(\Omega)} := \left( \int_\Omega \int_\Omega \frac{|u(x) - u(y)|^p}{|x - y|^{n + s p}} \, dx dy \right)^{\frac{1}{p}}.
$$
As it is customary, we endow~$W^{s, p}(\Omega)$ with the norm~$\| \cdot \|_{W^{s, p}(\Omega)}$ defined by the identity~$\| u \|_{W^{s, p}(\Omega)}^p := \| u \|_{L^p(\Omega)}^p + [ u ]_{W^{s, p}(\Omega)}^p$ and we simply write~$H^s(\Omega) := W^{s, 2}(\Omega)$ when~$p = 2$.

Another functional space that we will often use is the weighted Lebesgue space~$L^{p- 1}_s(\R^n)$ made up by all measurable functions~$u: \R^n \to \R$ for which
$$
\int_{\R^n} \frac{|u(x)|^{p - 1}}{(1 + |x|)^{n + s p}} \, dx < +\infty.
$$
For~$u \in L^{p - 1}_s(\R^n)$, the quantities
\begin{equation} \label{taildef}
\Tail_{s, p}(u; x_0, R) := \left( R^{s p}\int_{\R^n \setminus B_R(x_0)} \frac{|u(x)|^{p - 1}}{|x - x_0|^{n + s p}} \, dx \right)^{\frac{1}{p - 1}}
\end{equation}
and~$\overline{\Tail}_{s, p}(u; x_0, R) := R^{- \frac{s p}{p - 1}} \Tail_{s, p}(u; x_0, R)$ are finite for every point~$x_0 \in \R^n$ and every radius~$R > 0$. The tail term~\eqref{taildef}---introduced in~\cite{DKP14,DKP16}---is conveniently used to describe the behavior of~$u$ far away from~$x_0$. When~$x_0$ is the origin of~$\R^n$, we just write~$B_R := B_R(0)$,~$\Tail_{s, p}(u; R) := \Tail_{s, p}(u; 0, R)$, and~$\overline{\Tail}_{s, p}(u; R) := \overline{\Tail}_{s, p}(u; 0, R)$.

For~$k \in \R$, we indicate the super- and sublevel sets of a function~$u: \R^n \to \R$ respectively with~$A^+(k)$ and~$A^-(k)$. In symbols,
$$
A^+(k) := \{ u > k\} \quad \mbox{and} \quad A^-(k) := \{ u < k \}.
$$
We also write~$A^\pm(k, x_0, R) := A^\pm(k) \cap B_R(x_0)$ for their intersections with the ball~$B_R(x_0)$ and, as before,~$A^\pm(k, R) := A^\pm(k, 0, R)$.

Finally,~$v_+ := \max \{ v, 0 \}$ and~$v_- := (-v)_+ = \max \{ -v, 0 \}$ indicate respectively the positive and negative parts of a function~$v$.

With this in hand, we can now state the definition of \emph{weak fractional~De~Giorgi class}.

\begin{definition}[\bf Weak fractional De Giorgi class $\wDG$] \label{wDGdef}
Let~$d, \lambda \ge 0$ and~$H \ge 1$. A function~$u \in L^{p - 1}_s(\R^n)$ with~$u|_\Omega \in W^{s, p}(\Omega)$ belongs to~$\wDGpm(\Omega; d, H, \lambda)$ if
\begin{equation} \label{wDG+-def}
\begin{aligned}
& [(u - k)_\pm]_{W^{s, p}(B_{r}(x_0))}^p \phantom{ + \int_{B_{r}(x_0)} (u(x) - k)_\pm \int_{B_{2 R_0}(x)} \frac{dy}{|x - y|^{n + s p}} \, dx } \\
& \hspace{30pt} \le H \, \Bigg\{ R^\lambda d^p |A^\pm(k, x_0, R)| + \frac{R^{(1 - s) p}}{(R - r)^p} \| (u - k)_\pm \|_{L^p(B_R(x_0))}^p \\
& \hspace{30pt} \quad + \frac{R^{n + s p}}{(R - r)^{n + s p}} \| (u - k)_\pm \|_{L^1(B_R(x_0))} \overline{\Tail}_{s, p}((u - k)_\pm; x_0, r)^{p - 1} \Bigg\}
\end{aligned}
\end{equation}
holds for every point~$x_0 \in \Omega$, radii~$0 < r < R < \dist(x_0, \partial \Omega)$, and level~$k \in \R$. In addition,~$u \in \wDG(\Omega; d, H, \lambda)$ if and only if~$u \in \wDGp(\Omega; d, H, \lambda) \cap \wDGm(\Omega; d, H, \lambda)$.
\end{definition}

According to~\eqref{wDG+-def}, functions in~$\wDG$ satisfy a fractional and nonlocal version of the usual Caccioppoli inequality at all levels~$k$. Broader definitions can be considered, along the lines of those of~\cite[Section~6]{C17}. Here, we preferred to keep things as simple as possible, in order to favor readability over generality. Of course, we could take into account an even simpler definition, by removing the last line of~\eqref{wDG+-def} and thus neglecting the presence of tail terms. This choice would certainly be more elegant, as then the class~$\wDG(\Omega; d, H, \lambda)$ would be a subset of the Sobolev space~$W^{s, p}(\Omega)$. However, this definition would be too restrictive in light of our applications in Section~\ref{minsec} and~\ref{solsec}, which ultimately motivate the structure of~\eqref{wDG+-def}.

As for their classical counterparts, prototypical examples of functions belonging to weak fractional~De~Giorgi classes are the solutions of elliptic equations. While for standard De Giorgi classes, these equations are second order PDEs, the ones that are naturally associated to~$\wDG$ are fractional order equations driven by singular integral operators, such as the fractional Laplacian and nonlinear variations. This connection has been already observed by many authors---see, e.g.,~\cite{DKP16,KMS15a,BP16}.

As it has been partially anticipated in the introduction, classical De Giorgi classes were introduced for their importance in the regularity theory for second order equations, as they encode virtually all the information concerning the basic regularity properties enjoyed by the solutions of such equations---namely, local boundedness, H\"older continuity, and the validity of Harnack inequalities. The first goal of this section is to discuss whether these properties continue to hold for the fractional class~$\wDG$

As a first observation, we show that the elements of~$\wDG$ are locally bounded functions. Of course, when~$n < sp$ their boundedness (and H\"older continuity) is guaranteed by the Morrey-type embedding~$W^{s, p} \hookrightarrow C^{s - n / p}$ (see, e.g.,~\cite[Theorem~8.2]{DPV12}). Hence, at least for what concerns regularity, we can restrict ourselves to dealing with the case of~$n \ge sp$. For the sake of a simpler exposition, we will in fact suppose throughout the whole section that~$n > sp$. We stress that the critical case~$n = sp$---which is excluded here---only poses few additional technical difficulties and can be treated similarly---see~\cite[Section~6]{C17}.

\begin{theorem}[\bf Local boundedness of functions in $\wDG$] \label{ulocboundthm}
Let~$u \in \wDG(\Omega; d, H, \lambda)$ for some~$d, \lambda \ge 0$ and~$H \ge 1$. Then,~$u \in L^\infty_\loc(\Omega)$ and there exists a constant~$C \ge 1$, depending only on~$n$,~$s$,~$p$, and~$H$, such that
$$
\| u \|_{L^\infty(B_R(x_0))} \le C \left\{ \left( \dashint_{B_{2 R}(x_0)} |u(x)|^p \, dx \right)^{\frac{1}{p}} + \Tail_{s, p}(u; x_0, R) + R^{\frac{\lambda + s p}{p}} d \right\}
$$
for every~$x_0 \in \Omega$ and~$0 < R < \dist \left( x_0, \partial \Omega \right) / 2$.
\end{theorem}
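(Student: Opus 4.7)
The argument is a De Giorgi iteration in the spirit of the classical theory, adapted to accommodate the nonlocal tail. Since $u \in \wDGp(\Omega; d, H, \lambda)$, it suffices to derive the upper bound
\[
u \le M \text{ a.e.\ on } B_R(x_0), \quad M := C_0\left\{\left(\dashint_{B_{2R}(x_0)}|u|^p\,dx\right)^{\frac{1}{p}} + \Tail_{s,p}(u; x_0, R) + R^{\frac{\lambda + sp}{p}} d\right\},
\]
for a suitable constant $C_0 = C_0(n, s, p, H) \ge 1$; the parallel argument applied to $-u \in \wDGp$ (equivalently, $u \in \wDGm$) yields the matching lower bound. Take $x_0 = 0$ by translation and set, for $j \in \N$,
\[
r_j := R(1 + 2^{-j}), \quad k_j := M(1 - 2^{-j}), \quad w_j := (u - k_j)_+, \quad A_j := \int_{B_{r_j}} w_j^p\,dx,
\]
so that $r_j \searrow R$, $0 \le k_j \nearrow M$, and it is enough to prove $A_j \to 0$.

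Apply inequality \eqref{wDG+-def} at level $k_{j+1}$ on the pair of radii $(r_{j+1}, r_j) \subset (0, 2R)$. Because $k_{j+1} \ge 0$, one has $w_{j+1} \le |u|$ pointwise on $\R^n$, and since $r_{j+1} \ge R$ the tail factor obeys
\[
\overline{\Tail}_{s,p}(w_{j+1}; 0, r_{j+1})^{p-1} \le \int_{\R^n \setminus B_R}\frac{|u(x)|^{p-1}}{|x|^{n+sp}}\,dx = R^{-sp}\Tail_{s,p}(u; 0, R)^{p-1}.
\]
Chebyshev's inequality yields $|A^+(k_{j+1}, r_j)| \le 2^{(j+1)p}M^{-p}A_j$ (since $w_j \ge M\,2^{-(j+1)}$ on that set), and Hölder gives $\|w_{j+1}\|_{L^1(B_{r_j})} \le |A^+(k_{j+1}, r_j)|^{1-1/p}A_j^{1/p}$. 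Plugging these into \eqref{wDG+-def}, and using $r_j - r_{j+1} = R\,2^{-(j+1)}$ together with $r_j \le 2R$ to convert all radial dependencies into powers of $2^j$, one obtains
\[
[w_{j+1}]_{W^{s,p}(B_{r_{j+1}})}^p + r_{j+1}^{-sp}A_j \le C\,2^{j\alpha}\,R^{-sp}\,F(M)\,A_j,
\]
with $\alpha := n + sp + p - 1$, $C = C(n, s, p, H)$, and $F(M) := 1 + R^{\lambda+sp}d^p M^{-p} + \Tail_{s,p}(u; 0, R)^{p-1} M^{-(p-1)}$.

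Combine this with the fractional Sobolev embedding $W^{s,p}(B_{r_{j+1}}) \hookrightarrow L^{p^*}(B_{r_{j+1}})$, $p^* := np/(n-sp)$ (permissible since $n > sp$), and Hölder's inequality on the support $\{w_{j+1} > 0\} \subseteq A^+(k_{j+1}, r_j)$, which yields
\[
A_{j+1} \le \|w_{j+1}\|_{L^{p^*}(B_{r_{j+1}})}^p\,|A^+(k_{j+1}, r_j)|^{sp/n}.
\]
Inserting the previous bounds and passing to the scale-invariant quantity $\widetilde A_j := A_j / (R^n M^p)$, all powers of $R$ cancel and the recursion reduces to
\[
\widetilde A_{j+1} \le C_1\,b^j\,F(M)\,\widetilde A_j^{1 + sp/n},
\]
for some $b = b(n, s, p) > 1$ and $C_1 = C_1(n, s, p, H)$. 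Choosing $C_0$ large enough that both $F(M) \le 2$ and $\widetilde A_0 \le R^{-n}M^{-p}\|u\|_{L^p(B_{2R})}^p$ falls below the smallness threshold of the standard geometric iteration lemma forces $\widetilde A_j \to 0$, whence $u \le M$ a.e.\ on $B_R$.

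The only genuinely new difficulty compared with the local De Giorgi theory lies in controlling the nonlocal tails throughout the iteration. Each truncation $w_{j+1}$ in principle carries its own tail $\overline{\Tail}_{s,p}(w_{j+1}; x_0, r_{j+1})$, which must be dominated by a quantity independent of $j$ in order to be absorbed into $F(M)$. The elementary pointwise bound $w_{j+1} \le |u|$, available precisely because $k_{j+1} \ge 0$, collapses every tail into $\Tail_{s,p}(u; x_0, R)$; this is what makes the tail contribution in $F(M)$ homogeneous of a negative power of $M$, which is then defeated by enlarging $C_0$.
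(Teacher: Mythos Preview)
Your argument is correct and follows essentially the same route as the paper: a De~Giorgi iteration along the sequences $r_j=(1+2^{-j})R$ and $k_j=(1-2^{-j})M$, combining the Caccioppoli inequality~\eqref{wDG+-def} with the fractional Sobolev embedding and Chebyshev-type level set estimates, and controlling every tail uniformly by $\Tail_{s,p}(u;x_0,R)$ via the monotonicity $w_{j+1}\le u_+\le |u|$; the paper phrases the iteration through an intermediate radius $(\tau+\rho)/2$ and the two-parameter function $\varphi(k,\rho)=\|(u-k)_+\|_{L^p(B_\rho)}^p$ before specializing to the sequences, but the substance is identical.
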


We observe that a different version of Theorem~\ref{ulocboundthm}, valid for variants of weak fractional~De~Giorgi classes that do not include the presence of a tail term on the right-hand side of~\eqref{wDG+-def} and for~$p = 1$, is contained in~\cite{M11}. 

The estimate of Theorem~\ref{ulocboundthm} follows from analogous one-sided bounds for the elements of~$\wDGp$ and~$\wDGm$. By symmetry, it suffices to prove the following result.

\begin{proposition} \label{ulocboundprop}
Let~$u \in \wDGp(\Omega; d, H, \lambda)$ for some~$d, \lambda \ge 0$ and~$H \ge 1$. Then, there exists a constant~$C \ge 1$, depending only on~$n$,~$s$,~$p$, and~$H$, such that
\begin{equation} \label{ulocbound}
\sup_{B_R(x_0)} u \le C \left\{ \left( \dashint_{B_{2 R}(x_0)} u_+(x)^p \, dx \right)^{\frac{1}{p}} + \Tail_{s, p}(u_+; x_0, R) + R^{\frac{\lambda + s p}{p}} d \right\}
\end{equation}
for every~$x_0 \in \Omega$ and~$0 < R < \dist \left( x_0, \partial \Omega \right) / 2$.
\end{proposition}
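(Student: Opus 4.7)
My plan is to run a De Giorgi iteration on shrinking concentric balls and increasing levels. Fix $x_0 \in \Omega$ and $0 < R < \dist(x_0,\partial\Omega)/2$, and a parameter $\kappa > 0$ to be chosen. Set
$$
r_j := R\bigl(1 + 2^{-j}\bigr), \qquad k_j := \kappa\bigl(1 - 2^{-j}\bigr), \qquad B_j := B_{r_j}(x_0), \qquad A_j := \|(u - k_j)_+\|_{L^p(B_j)}^p,
$$
so $r_j \searrow R$ and $k_j \nearrow \kappa$. The goal is to show that if $\kappa$ dominates (a constant multiple of) the right-hand side of~\eqref{ulocbound}, then $A_j \to 0$. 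By monotone convergence this gives $(u-\kappa)_+ = 0$ a.e.\ in $B_R(x_0)$, which is~\eqref{ulocbound}.

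\textbf{Caccioppoli and Chebyshev bounds.} I apply the $\wDGp$ inequality with inner radius $r_{j+1}$, outer radius $r_j$ and level $k_{j+1}$. This controls $[(u - k_{j+1})_+]_{W^{s,p}(B_{j+1})}^p$ by the sum of the three terms in~\eqref{wDG+-def}: a $d^p$-term times $|A^+(k_{j+1}, x_0, r_j)|$, a local energy term of size $\lesssim 2^{jp} R^{-sp} A_j$ (since $r_j - r_{j+1} \simeq R 2^{-j}$ and $r_j \simeq R$), and a tail term of size
$$
\lesssim 2^{j(n+sp)} R^{-(n+sp)} \|(u-k_{j+1})_+\|_{L^1(B_j)} \, \overline{\Tail}_{s,p}((u-k_{j+1})_+; x_0, r_{j+1})^{p-1}.
$$
Since $(u - k_j)_+ \ge k_{j+1} - k_j = \kappa 2^{-j-1}$ on $A^+(k_{j+1})$, Chebyshev's inequality gives
$$
|A^+(k_{j+1}, x_0, r_j)| \le \bigl(\tfrac{2^{j+1}}{\kappa}\bigr)^{p} A_j, \qquad \|(u-k_{j+1})_+\|_{L^1(B_j)} \le \bigl(\tfrac{2^{j+1}}{\kappa}\bigr)^{p-1} A_j.
$$
Moreover, because $(u - k_{j+1})_+ \le u_+$ pointwise and $r_{j+1} > R$, a direct computation shows $\overline{\Tail}_{s,p}((u-k_{j+1})_+; x_0, r_{j+1})^{p-1} \le R^{-sp}\,\Tail_{s,p}(u_+; x_0, R)^{p-1}$.

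\textbf{Sobolev and the recursion.} I then chain the above Caccioppoli estimate with the fractional Sobolev inequality on the ball $B_{j+1}$,
$$
\|(u-k_{j+1})_+\|_{L^{p^*}(B_{j+1})}^p \le C\Bigl([(u-k_{j+1})_+]_{W^{s,p}(B_{j+1})}^p + r_{j+1}^{-sp}\|(u-k_{j+1})_+\|_{L^p(B_{j+1})}^p\Bigr), \qquad p^* := \tfrac{np}{n-sp},
$$
together with the Hölder-interpolation step $A_{j+1} \le \|(u-k_{j+1})_+\|_{L^{p^*}(B_{j+1})}^{p}\, |A^+(k_{j+1}, x_0, r_{j+1})|^{sp/n}$. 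Plugging in the Chebyshev bounds and the tail control, then normalising by $\widetilde{A}_j := A_j/(|B_R|\,\kappa^p)$, I expect (after absorbing the $d$- and tail-contributions under the assumption that $\kappa \ge R^{(\lambda+sp)/p} d + \Tail_{s,p}(u_+; x_0, R)$) a recursion
$$
\widetilde{A}_{j+1} \le C_\star\, b^{\,j}\, \widetilde{A}_j^{\,1+\sigma}, \qquad \sigma := \tfrac{sp}{n} > 0,
$$
with $C_\star, b$ depending only on $n,s,p,H$. By the classical geometric-decay lemma, $\widetilde{A}_j \to 0$ provided $\widetilde{A}_0$ is smaller than an explicit threshold. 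Choosing
$$
\kappa := C \left\{\Bigl(\dashint_{B_{2R}(x_0)} u_+^p\, dx\Bigr)^{1/p} + \Tail_{s,p}(u_+; x_0, R) + R^{(\lambda+sp)/p}\, d\right\}
$$
with $C = C(n,s,p,H)$ large enough fulfils both the absorption hypothesis and the smallness of $\widetilde{A}_0$, and thus gives~\eqref{ulocbound}.

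\textbf{Main obstacle.} Apart from routine bookkeeping of dimensional constants, the only substantive deviation from the classical De Giorgi scheme is the treatment of the purely nonlocal tail term in~\eqref{wDG+-def}. I need to verify that $\overline{\Tail}_{s,p}((u-k_{j+1})_+; x_0, r_{j+1})$ can be bounded, uniformly in $j$, by $\Tail_{s,p}(u_+; x_0, R)$, and that its prefactor $\|(u-k_{j+1})_+\|_{L^1}$ supplies a $\kappa^{1-p}$ improvement---this is precisely what allows the tail contribution to be absorbed into a superlinear power $\widetilde{A}_j^{1+\sigma}$ and not merely into a linear $\widetilde{A}_j$, which is essential for the iteration to close.
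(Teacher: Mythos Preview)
Your proposal is correct and follows essentially the same De~Giorgi iteration as the paper's proof: the same sequences $r_j = (1+2^{-j})R$ and $k_j = \kappa(1-2^{-j})$, the same chaining of the Caccioppoli inequality~\eqref{wDG+-def} with the fractional Sobolev embedding and Chebyshev bounds, the same tail control via $(u-k_{j+1})_+ \le u_+$ and monotonicity in the radius, and the same geometric-decay lemma to close. The only cosmetic difference is that the paper inserts an intermediate radius $(\tau+\rho)/2$ to separate the Sobolev step from the Caccioppoli step, whereas you apply both on the pair $(r_{j+1},r_j)$ directly; this is immaterial.
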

\begin{proof}
Our argument is a simple variation of the one that leads to, say,~\cite[Theorem~7.2]{Giu03}.

Up to a translation, we may assume that~$x_0$ is the origin. Let two radii~$R \le \rho < \tau \le 2 R$ be fixed, take~$k \ge 0$, and set~$w_k := (u - k)_+$. Using H\"older and fractional Sobolev inequalities, it is not hard to infer that
$$
\| w_k \|_{L^p(B_\rho)}^p \le C |A^+(k, \rho)|^{sp / n} \left( [w_k]_{W^{s, p}(B_{(\tau + \rho) / 2})}^p + \frac{\tau^{(1 - s) p}}{(\tau - \rho)^p} \| w_k \|_{L^p(B_\tau)}^p \right),
$$
for some constant~$C \ge 1$ depending only on~$n$,~$s$, and~$p$. This and~\eqref{wDG+-def} give
\begin{equation} \label{Linftytech}
\begin{aligned}
\| w_k \|_{L^p(B_\rho)}^p & \le C |A^+(k, \rho)|^{sp / n} \Bigg\{ \tau^\lambda d^p  |A^+(k, \tau)| + \frac{\tau^{(1 - s) p}}{(\tau - \rho)^p} \| w_k \|_{L^p(B_\tau)}^p \\
& \quad + \frac{\tau^{n + s p}}{(\tau - \rho)^{n + s p}} \| w_k \|_{L^1(B_\tau)} \overline{\Tail}_{s, p}(w_k; R)^{p - 1} \Bigg\},
\end{aligned}
\end{equation}
where~$C$ may now depend on~$H$ as well.

Letting~$0 \le h < k$, it is easy to see that
$$
|A^+(k, r)| \le \frac{\| w_h \|_{L^p(B_r)}^p}{(k - h)^p}, \quad \| w_k \|_{L^p(B_r)}^p \le \| w_h \|_{L^p(B_r)}^p, \quad \| w_k \|_{L^1(B_r)} \le \frac{\| w_h \|_{L^p(B_r)}^p}{(k - h)^{p - 1}},
$$
and
$$
\overline{\Tail}_{s, p}(w_k; r)^{p - 1} \le \overline{\Tail}_{s, p}(w_0; r)^{p - 1} = r^{- s p} \Tail_{s, p}(u_+; r)^{p - 1}
$$
for every~$r > 0$. Accordingly,~\eqref{Linftytech} yields the estimate
$$
\varphi(k, \rho) \le \frac{C \tau^{- s p}}{(k - h)^{\frac{s p^2}{n}}} \left\{ \frac{\tau^{\lambda + s p} d^p}{(k - h)^p} + \frac{\tau^{p}}{(\tau - \rho)^p}  + \frac{\tau^{n + s p} \Tail_{s, p}(u_+; R)^{p - 1}}{(\tau - \rho)^{n + s p}(k - h)^{p - 1}} \right\} \varphi(h, \tau)^{1 + \frac{s p}{n}}
$$
for the quantities~$\varphi(\ell, r) := \| w_\ell \|_{L^p(B_r)}^p$.

Consider now the sequences~$\{ k_i \}$ and~$\{ \rho_i \}$, respectively defined by~$k_i := M (1 - 2^{- i})$ and~$\rho_i := (1 + 2^{- i}) R$ for all integers~$i \ge 0$ and for some~$M > 0$ to be chosen later. Set~$\varphi_i := \varphi(k_i, \rho_i)$. By evaluating the last inequality along these two sequences, we obtain
$$
\varphi_{i + 1} \le \frac{C \, 2^{(n + 3 p) i}}{M^{s p^2 / n} R^{s p}} \left\{ \frac{R^{\lambda + s p} d^p}{M^p} + 1 + \frac{\Tail_{s, p}(u_+; R)^{p - 1}}{M^{p - 1}} \right\} \varphi_i^{1 + \frac{s p}{n}}.
$$
By choosing~$M \ge M_1 := \Tail_{s, p}(u_+; R) + R^{(\lambda + s p)/p} d$, we are finally led to the estimate
$$
\varphi_{i + 1} \le \frac{C \, 2^{(n + 3 p) i}}{M^{s p^2 / n} R^{s p}} \, \varphi_i^{1 + \frac{s p}{n}}.
$$
Thanks to a standard numerical lemma (e.g.,~\cite[Lemma~7.1]{Giu03}), we conclude that~$\varphi_i$ converges to~$0$, provided~$M$ is greater than the constant~$M_2 := C' R^{- n / p} \| u_+ \|_{L^p(B_{2 R})}$ with~$C' \ge 1$ large enough, in dependence of~$n$,~$s$,~$p$, and~$H$ only. This gives~\eqref{ulocbound}.
\end{proof}

Following the theory of classical De Giorgi classes, the natural next step would be to understand whether the functions of~$\wDG$ are H\"older continuous. It turns out that this is not the case, at least when~$s p < 1$. This is a consequence of an explicit one-dimensional example that we will present in Appendix~\ref{charexapp}.

\begin{question}
Is it true that functions in~$\wDG$ are H\"older continuous, when~$sp \ge 1$?
\end{question}

In order to extend the H\"older regularity estimates that are true for classical~De~Giorgi classes, we are thus forced in general to consider a strict subset of~$\wDG$. To this aim, we propose the following definition.

\begin{definition}[\bf Fractional De Giorgi class $\DG$] \label{DGdef}
Let~$d, \lambda \ge 0$ and~$H \ge 1$. A function~$u \in L^{p - 1}_s(\R^n)$ with~$u|_\Omega \in W^{s, p}(\Omega)$ belongs to~$u \in \DGpm(\Omega; d, H, \lambda)$ if
\begin{equation} \label{DG+-def}
\begin{aligned}
& [(u - k)_\pm]_{W^{s, p}(B_{r}(x_0))}^p + \int_{B_{r}(x_0)} (u(x) - k)_\pm \left\{ \int_{\R^n} \frac{ (u(y) - k)_\mp^{p - 1}}{|x - y|^{n + s p}} \, dy \right\} dx \\
& \hspace{30pt} \le H \, \Bigg\{ R^\lambda d^p |A^\pm(k, x_0, R)| + \frac{R^{(1 - s) p}}{(R - r)^p} \| (u - k)_\pm \|_{L^p(B_R(x_0))}^p \\
& \hspace{30pt} \quad + \frac{R^{n + s p}}{(R - r)^{n + s p}} \| (u - k)_\pm \|_{L^1(B_R(x_0))} \overline{\Tail}_{s, p}((u - k)_\pm; x_0, r)^{p - 1} \Bigg\}
\end{aligned}
\end{equation}
holds for every point~$x_0 \in \Omega$, radii~$0 < r < R < \dist(x_0, \partial \Omega)$, and level~$k \in \R$. We then set~$\DG(\Omega; d, H, \lambda) := \DGp(\Omega; d, H, \lambda) \cap \DGm(\Omega; d, H, \lambda)$.

\end{definition}

We will call~$\DG$ a \emph{strong fractional~De~Giorgi class} or, simply, a \emph{fractional~De~Giorgi class}. It is clear that~$\DG$ is a subset of~$\wDG$. The difference between the two classes lies in the fact that the elements of~$\DG$ satisfy the stronger Caccioppoli-type inequality~\eqref{DG+-def}, which improves~\eqref{wDG+-def} via the presence of an additional summand on its left-hand side. We remark that the specific structure of this term can be partially altered without totally spoiling the results that will follow in the remainder of the section. For instance, if one replaces it with the smaller (and, perhaps, more natural) quantity
$$
\int_{B_{r}(x_0)} \int_{B_r(x_0)} \frac{ (u(x) - k)_\pm (u(y) - k)_\mp^{p - 1}}{|x - y|^{n + s p}} \, dx dy,
$$
all future statements will still hold, apart from the Harnack inequality of Theorem~\ref{DGharthm}.

Though more artificial than~\eqref{wDG+-def}, inequality~\eqref{DG+-def} is still satisfied by solutions of problems involving energies and operators of fractional order, as we will see in Sections~\ref{minsec} and~\ref{solsec} In addition, it turns out that definition~\eqref{DG+-def} is strong enough to guarantee the H\"older continuity of the functions that satisfy it. This has been first realized by Caffarelli, Chan \& Vasseur~\cite{CCV11} for a similar inequality in the context of nonlocal parabolic equations.

Here is our H\"older regularity result for functions in~$\DG$.

\begin{theorem}[\bf H\"older continuity of functions in $\DG$] \label{ulocHolderthm}
Let~$u \in \DG(\Omega; d, H, \lambda)$ for some~$d, \lambda \ge 0$ and~$H \ge 1$. Then,~$u \in C^\alpha_\loc(\Omega)$ for some~$\alpha \in (0, 1)$ and there exists a constant~$C \ge 1$ such that
$$
[ u ]_{C^\alpha(B_R(x_0))} \le \frac{C}{R^\alpha} \left( \| u \|_{L^\infty(B_{2 R}(x_0))} + \Tail_{s, p}(u; x_0, 2 R) + R^{\frac{\lambda + s p}{p}} d \right)
$$
for every~$x_0 \in \Omega$ and~$0 < R < \dist \left( x_0, \partial \Omega \right) / 2$. The constants~$\alpha$ and~$C$ depend only on~$n$,~$s$,~$p$,~$H$, and~$\lambda$.
\end{theorem}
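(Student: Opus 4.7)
The plan is to establish Hölder continuity through the classical De Giorgi oscillation-decay mechanism, adapted to the nonlocal setting. After reducing to $x_0 = 0$ by translation, the aim is to show that on a geometric sequence of radii $R_j := \sigma^j R$ with some fixed $\sigma \in (0, 1/4)$, an augmented oscillation $\omega_j$ (equal to $\osc_{B_{R_j}} u$ up to tail and $d$-correctors) satisfies $\omega_{j+1} \le \theta \omega_j$ for a fixed $\theta \in (0, 1)$. A standard numerical lemma then converts this geometric decay into a quantitative Hölder estimate with exponent $\alpha = \alpha(n, s, p, H, \lambda)$ matching the right-hand side of the statement; Theorem~\ref{ulocboundthm} supplies the $L^\infty$ ingredient needed to initialize the scheme.

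The inductive step reduces, after rescaling and normalizing, to the following oscillation-decay lemma: there exist $\eta, \nu \in (0, 1)$, depending only on $n, s, p, H, \lambda$, such that if $u \in \DG(B_1; \tilde d, H, \lambda)$ satisfies $|u| \le 1$ on $B_1$, $\Tail_{s,p}(u; 1) \le 1$, $\tilde d \le 1$, and the measure bound $|\{u \le 0\} \cap B_1| \ge \nu |B_1|$, then $u \le 1 - \eta$ on $B_{1/2}$. Applied alternately to $u$ and to $-u$, both of which lie in $\DG$, this forces at least one endpoint of the oscillation range to retract by a definite amount at every dyadic scale, yielding the desired recursion on $\omega_j$.

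To prove the core lemma I would run a De Giorgi iteration on the levels $k_i := 1 - 2^{-i} \eta_0$ along a geometric sequence of radii $r_i \searrow 1/2$. Plugging each $k_i$ into inequality~\eqref{DG+-def} taken with the $+$ sign, invoking the fractional Sobolev embedding on $B_{r_i}$, and using Hölder's inequality on the superlevel sets yields a nonlinear recursion
$$
\varphi_{i+1} \le C^i \, \varphi_i^{1 + sp/n}, \qquad \varphi_i := \|(u - k_i)_+\|_{L^p(B_{r_i})}^p,
$$
which the standard iteration lemma (e.g.,~\cite[Lemma~7.1]{Giu03}) drives to zero once $\varphi_0$ is small enough. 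The required smallness of $\varphi_0$ would be obtained by a preliminary intermediate-level-set argument in the spirit of De Giorgi's isoperimetric lemma, combining a fractional Poincaré inequality with the measure hypothesis to transfer "large measure below $0$" into "small $L^p$ norm near the top level".

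The main obstacle, and the step where the strong class $\DG$ is essential, is controlling the tail $\overline{\Tail}_{s,p}((u - k_i)_+; 0, r_i)^{p-1}$ on the right-hand side of \eqref{DG+-def}: this quantity does not shrink as $i \to \infty$, since $(u - k_i)_+$ outside $B_1$ is not controlled by the vanishing margin $1 - k_i$. The extra nonlocal summand on the left-hand side of \eqref{DG+-def} is precisely what resolves this. Choosing $\eta_0 \le 1/2$, the measure hypothesis guarantees that $(u - k_i)_- \ge 1/2$ on a subset of $B_1$ of measure at least $\nu |B_1|$, so that
$$
\int_{\R^n} \frac{(u(y) - k_i)_-^{p-1}}{|x - y|^{n + sp}} \, dy \ge c \, \nu \quad \text{for every } x \in B_{r_i},
$$
and the extra term is bounded below by $c \, \nu \, \|(u - k_i)_+\|_{L^1(B_{r_i})}$. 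Via $L^1$--$L^p$ interpolation on $A^+(k_i, r_i)$, this lower bound absorbs the obstinate tail contribution on the right and closes the recursion. This is the stationary elliptic analogue of the mechanism of~\cite{CCV11}, and it is the reason the same argument fails in $\wDG$ when $sp < 1$, consistently with the counterexample in Appendix~\ref{charexapp}.
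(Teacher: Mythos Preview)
Your high-level plan---iterating a growth/oscillation-decay lemma at geometric scales---matches the paper's, and you correctly identify the extra nonlocal summand on the left of~\eqref{DG+-def} as the mechanism that separates~$\DG$ from~$\wDG$. However, you have misdiagnosed \emph{where} this term is needed, and the argument as written has a genuine gap.

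In your normalized setup (\,$|u|\le 1$ in~$B_1$ and~$\Tail_{s,p}(u;1)\le 1$\,), the quantity~$\overline{\Tail}_{s,p}((u-k_i)_+;r_i)^{p-1}$ on the right of~\eqref{DG+-def} is already uniformly bounded, since~$(u-k_i)_+ \le u_+$ for~$k_i>0$. The De~Giorgi iteration on~$\varphi_i$ then runs exactly as in the proof of Proposition~\ref{ulocboundprop} (compare Lemma~\ref{stepIIlem}): it closes and drives~$\varphi_i\to 0$ as soon as~$\varphi_0$ is small enough, using only the \emph{weak} inequality~\eqref{wDG+-def}. No absorption of the tail by the extra term is needed at this stage; your proposed absorption would in any case fail, because the coefficient~$R^{n+sp}/(R-r)^{n+sp}$ in front of the tail term grows like~$2^{(n+sp)i}$ along the sequence of radii, while your lower bound~$c\,\nu$ is fixed.

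The real gap is the step you gloss over: obtaining the smallness of~$\varphi_0$ ``by a preliminary intermediate-level-set argument in the spirit of De~Giorgi's isoperimetric lemma''. That lemma is \emph{not available} in~$W^{s,p}$ for general~$s\in(0,1)$; as discussed after Proposition~\ref{sDGlemprop}, any such inequality necessarily fails for~$s<1/p$, since characteristic functions of smooth sets lie in~$W^{s,p}$ in that range---the same obstruction behind the counterexample of Appendix~\ref{charexapp}. This is precisely the place where the paper deploys the extra term of~$\DG$: applying~\eqref{DG+-def} at a \emph{single} level~$k$ near the top, the lower bound you wrote for the inner integral (coming from the density of~$\{u\le 0\}$) combines with the trivial upper bound on the right-hand side to yield directly that~$|\{u>k\}\cap B_1|$ is small---see Lemma~\ref{stepIlem}. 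In other words, the nonlocal term \emph{replaces} the isoperimetric step rather than absorbing tails inside the iteration. With this correction your argument becomes the paper's two-step growth lemma (Lemmata~\ref{stepIlem} and~\ref{stepIIlem}).
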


Theorem~\ref{ulocHolderthm} can be proved via an inductive argument based on subsequent applications of a suitable \emph{growth lemma} at smaller and smaller scales. This method goes back to~De~Giorgi~\cite{DeG57} and our implementation of it in this framework follows rather closely the approaches of Silvestre~\cite{Sil06} and~Kassmann~\cite{Kas09,Kas11}. We omit further details, that can be found in the proof of~\cite[Theorem~6.4]{C17}.

The statement of the growth lemma is as follows.


\begin{lemma} \label{growthlem}
Let~$u \in \DGm(B_{4 R}; d, H, \lambda)$ for some~$d, \lambda \ge 0$,~$H \ge 1$, and~$R > 0$. Assume that
$$
u \ge 0 \quad \mbox{in } B_{4 R}
$$
and
$$
\left| B_{2 R} \cap \{ u \ge 1 \} \right| \ge \frac{1}{2} \left| B_{2 R} \right|.
$$
There exists a constant~$\delta \in (0, 1/8]$, depending only on~$n$,~$s$,~$p$,~$H$, and~$\lambda$, such that, if
$$
R^{\frac{\lambda + s p}{p}} d + \Tail_{s,p}(u_-; 4 R) \le \delta,
$$
then
$$
u \ge \delta \quad \mbox{in } B_R.
$$
\end{lemma}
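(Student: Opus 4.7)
The plan is a De Giorgi-type iteration, in which the nonlocal cross-interaction term in the defining inequality of $\DGm$ serves as the nonlocal analogue of De Giorgi's measure-theoretic isoperimetric lemma. Translating so $x_0 = 0$, I would introduce the decreasing sequences
$$k_j := \delta(1 + 2^{-j}), \qquad r_j := R(1 + 2^{-j}), \qquad \tilde r_j := (r_j + r_{j+1})/2,$$
together with $w_j := (u - k_j)_-$ and $a_j := |\{u < k_j\} \cap B_{r_j}|$. The objective is to prove $a_j \to 0$, which yields $u \geq \delta$ a.e.\ in $B_R$.

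The first step is a uniform tail bound: since $u \geq 0$ in $B_{4R}$, one has $w_j \leq 2\delta$ in $B_{4R}$ and $w_j \leq 2\delta + u_-$ outside; combined with the hypothesis $\Tail_{s,p}(u_-; 4R) \leq \delta$ this yields $\overline\Tail_{s,p}(w_j; 0, \rho)^{p-1} \leq C \delta^{p-1} R^{-sp}$ for every $\rho \in [R, 4R]$. Plugging this estimate, the pointwise bound $w_{j+1} \leq 2\delta$ in $B_{4R}$, and the assumption $R^{(\lambda+sp)/p} d \leq \delta$ into the $\DGm$ inequality applied at level $k_{j+1}$ with inner radius $\tilde r_j$ and outer radius $r_j$, the right-hand side is controlled by $C\, 2^{j(n+sp)}\, R^{-sp}\, \delta^p\, a_j$, which simultaneously bounds both $[w_{j+1}]_{W^{s,p}(B_{\tilde r_j})}^p$ and the cross-interaction term $\mathcal I_{j+1}$ on the left of \eqref{DG+-def}.

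The crucial input is the lower bound on $\mathcal I_{j+1}$. For $\delta \leq 1/8$, every $y \in \{u \geq 1\} \cap B_{2R}$ satisfies $(u(y) - k_{j+1})_+ \geq 1/2$, while $|x-y| \leq 4R$ for $x \in B_{\tilde r_j}$; combined with $|\{u \geq 1\} \cap B_{2R}| \geq |B_{2R}|/2$, this yields $\mathcal I_{j+1} \geq c\, R^{-sp}\, \|w_{j+1}\|_{L^1(B_{\tilde r_j})}$, with $c = c(n,p)$. Using the Gagliardo bound together with the fractional Sobolev embedding $W^{s,p}(B_{\tilde r_j}) \hookrightarrow L^{p^*_s}(B_{\tilde r_j})$ (valid since $n > sp$, with $p^*_s = np/(n-sp)$), H\"older's inequality on $A_{j+1} \subset B_{r_{j+1}}$, and Chebyshev applied to $w_j$, one arrives at a nonlinear recursion of the form $a_{j+1} \leq C\, \gamma^j\, R^{-sp}\, a_{j-1}\, a_j^{sp/n}$; iterating twice produces the super-linear form $a_{j+2} \leq C\, \gamma^{C'j}\, R^{-sp(1 + sp/n)}\, a_{j-1}^{sp/n}\, a_j^{1 + (sp/n)^2}$, to which the standard numerical lemma \cite[Lemma~7.1]{Giu03} applies to force $a_j \to 0$ provided $a_0$ is small enough.

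The main obstacle is securing this initial smallness of $a_0$: the bare hypothesis only gives $a_0 \leq |B_{2R}|/2$. This is where the $L^1$ bound coming from $\mathcal I_{j+1}$ plays a second role: applied at dyadic intermediate levels between $1$ and $2\delta$ and combined with Chebyshev, each iteration converts the half-measure hypothesis into a measure decrease by a factor of $\delta^{p-1}$ (up to constants depending on $n, s, p, H, \lambda$). Choosing $\delta \in (0, 1/8]$ small enough in terms of $n, s, p, H$, and $\lambda$ then simultaneously ensures the pointwise bound $\delta \leq 1/8$ used in the cross-term estimate, the smallness of $d$ and of the tail, and the quantitative smallness of $a_0$ required to launch the super-linear iteration. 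The non-triviality of the argument is that the cross-interaction term, absent in the larger class $\wDG$, is genuinely needed here: without it the conclusion already fails, as Appendix~\ref{charexapp} demonstrates.
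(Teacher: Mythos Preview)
Your proposal is correct in spirit and follows the same two-ingredient strategy as the paper: exploit the cross-interaction term of~\eqref{DG+-def} to gain quantitative smallness of $|\{u<2\delta\}\cap B_{2R}|$, then run a De~Giorgi iteration to upgrade this to the pointwise bound $u\ge\delta$ in $B_R$. The paper, however, packages the two steps as separate sublemmata (Lemmata~\ref{stepIIlem} and~\ref{stepIlem}), and this separation buys two things. First, the iteration in Lemma~\ref{stepIIlem} uses only the \emph{weak} inequality~\eqref{wDG+-def}, the smallness~\eqref{denstausmall} having been fed in as a hypothesis; this isolates exactly the one place where the strong class is needed, namely Lemma~\ref{stepIlem}. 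Second, under~\eqref{denstausmall} each truncation $z_k=(u-k)_-$ vanishes on a set of density at least $1/2$ in $B_\rho$, so the paper can invoke a fractional Poincar\'e--Sobolev inequality for functions with a fat zero set and obtain directly a clean one-term recursion to which~\cite[Lemma~7.1]{Giu03} applies. Your route through the full embedding $W^{s,p}\hookrightarrow L^{p^*_s}$ leads instead to the two-index relation $a_{j+1}\le C\gamma^j R^{-sp}a_{j-1}a_j^{sp/n}$; this can still be closed after using the monotonicity $a_j\le a_{j-1}$ to reduce to a recursion for the even (or odd) subsequence, but that extra reduction is not quite the ``standard numerical lemma'' as you state it, and the $L^1$ bound you extract from $\mathcal I_{j+1}$ inside the iteration plays no role in closing it. Finally, for the initial smallness the paper shows that a \emph{single} application of~\eqref{DG+-def} at level $k=4\delta$ with fixed radii already yields $|B_2\cap\{u<2\delta\}|\le C\delta^{p-1}$ (this is the whole content of Lemma~\ref{stepIlem}); your proposed descent through ``dyadic intermediate levels between $1$ and $2\delta$'' is therefore more elaborate than what is actually needed.
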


We split the proof of Lemma~\ref{growthlem} into two sublemmata. Interestingly, the first one only relies on the weaker Caccioppoli inequality~\eqref{wDG+-def} and is therefore valid for all functions in~$\wDGm$.


\begin{lemma} \label{stepIIlem}
Let~$u \in \wDGm(B_4; d, H, \lambda)$ for some~$d, \lambda \ge 0$ and~$H \ge 1$. There exists a constant~$\tau \in (0, 2^{- n - 1}]$, depending only on~$n$,~$s$,~$p$,~$H$, and~$\lambda$, such that if~$u \ge 0$ in~$B_2$,
\begin{equation} \label{denstausmall}
\left| B_2 \cap \{ u < 2 \delta \} \right| \le \tau \left| B_2 \right|,
\end{equation}
and
\begin{equation} \label{deltasmall1}
d + \Tail_{s,p}(u_-; 2) \le \delta,
\end{equation}
for some~$\delta \in (0, 1/2]$, then
\begin{equation} \label{ugedelta1}
u \ge \delta \quad \mbox{in } B_1.
\end{equation}
\end{lemma}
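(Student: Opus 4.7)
The plan is to run a De Giorgi iteration on a decreasing sequence of levels and radii. For $j \geq 0$, set
\[
k_j := \delta \bigl( 1 + 2^{-j} \bigr), \qquad \rho_j := 1 + 2^{-j},
\]
and denote $w_j := (u - k_j)_-$ and $A_j := |B_{\rho_j} \cap \{ u < k_j \}|$. Observe that $k_j \searrow \delta$, $\rho_j \searrow 1$, and since $u \ge 0$ in $B_2$ with $k_j \le 2\delta \le 1$, one has $0 \le w_j \le 2\delta$ throughout $B_2$. The goal is to show that if $\tau$ is chosen small enough then $A_j \to 0$, which by monotone convergence immediately gives \eqref{ugedelta1}.

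I would apply the weak fractional Caccioppoli inequality \eqref{wDG+-def} (with the $-$ sign) to the admissible pair $r = \rho_{j+1} < R = \rho_j \le 2 < 4$. Using $w_j \le 2\delta$ and $|\{ w_j > 0 \} \cap B_{\rho_j}| \le A_j$, together with $d \le \delta$ from \eqref{deltasmall1}, the first two summands on the right-hand side of \eqref{wDG+-def} are controlled by $C \, 2^{pj} \delta^p A_j$, with $C$ depending only on $n, s, p, H, \lambda$. For the tail contribution, I would split the region of integration into $B_2 \setminus B_{\rho_{j+1}}$ and $\R^n \setminus B_2$: on the inner annulus $w_j \le 2\delta$, while on the exterior one has $w_j \le u_- + 2\delta$ because $u_-$ vanishes in $B_2$. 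Hence, by \eqref{deltasmall1},
\[
\overline{\Tail}_{s, p}(w_j; \rho_{j+1})^{p - 1} \le C \bigl( \delta^{p-1} + \Tail_{s,p}(u_-; 2)^{p-1} \bigr) \le C \delta^{p-1}.
\]
Combined with $\| w_j \|_{L^1(B_{\rho_j})} \le 2 \delta A_j$, the third summand in \eqref{wDG+-def} is bounded by $C \, 2^{(n+sp)j} \delta^p A_j$, so that
\[
[w_j]_{W^{s,p}(B_{\rho_{j+1}})}^p + \| w_j \|_{L^p(B_{\rho_{j+1}})}^p \le C \, 2^{(n+sp)j} \delta^p A_j.
\]

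To close the iteration, I would invoke the fractional Sobolev embedding $W^{s,p}(B_{\rho_{j+1}}) \hookrightarrow L^{p^*_s}(B_{\rho_{j+1}})$ with $p^*_s := np/(n - sp) > p$ to upgrade the previous estimate to an $L^{p^*_s}$ bound. Since $\{ u < k_{j+1} \} \subset \{ w_j > k_j - k_{j+1} \}$ and $k_j - k_{j+1} = \delta \, 2^{-j-1}$, Chebyshev's inequality gives
\[
A_{j+1} \le \frac{\| w_j \|_{L^{p^*_s}(B_{\rho_{j+1}})}^{p^*_s}}{(\delta \, 2^{-j-1})^{p^*_s}} \le C \, b^j A_j^{1 + \sigma},
\]
where $\sigma := s p / (n - s p) > 0$ and $b > 1$ depend only on $n, s, p$; crucially, the factors of $\delta$ cancel between numerator and denominator. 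A standard numerical iteration lemma (e.g., \cite[Lemma~7.1]{Giu03}) then yields $A_j \to 0$ as soon as $A_0 = |B_2 \cap \{ u < 2\delta \}|$ is smaller than a computable threshold, and it suffices to take $\tau \in (0, 2^{-n-1}]$ beneath that threshold to satisfy \eqref{denstausmall} and conclude.

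The main obstacle is the bookkeeping of the tail term: one must ensure that the final recursion for $A_j$ is \emph{independent} of $\delta$, which is only possible because the $\delta^{p-1}$ arising in $\overline{\Tail}_{s,p}(w_j; \rho_{j+1})^{p-1}$ matches precisely the $\delta^{p^*_s}$ extracted by Chebyshev. Note that only the weak inequality \eqref{wDG+-def} is used, consistently with the observation preceding the statement that this sublemma already holds for functions in $\wDGm$.
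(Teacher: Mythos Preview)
Your argument is correct, but it follows a different route from the paper's.  The paper does not use the standard embedding $W^{s,p}\hookrightarrow L^{p^*_s}$; instead it exploits the density hypothesis \eqref{denstausmall} twice.  First, the bound $\tau\le 2^{-n-1}$ guarantees $|B_\rho\cap\{z_k=0\}|\ge |B_\rho|/2$ for every $\rho\in[1,2]$, which allows the author to invoke a fractional Poincar\'e--Sobolev inequality for functions vanishing on a set of positive density (\cite[Corollary~4.9]{C17}).  Combined with a H\"older step that passes from the $W^{s/2,1}$ to the $W^{s,p}$ seminorm, this yields
\[
(k-h)\,|A^-(h,\rho)|^{\frac{2n-s}{2n}}\le C\,|A^-(k,\rho)|^{\frac{p-1}{p}}\,[z_k]_{W^{s,p}(B_\rho)},
\]
and only then is the Caccioppoli inequality applied.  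The resulting recursion has exponent gain $s/(2n-s)$ rather than your $sp/(n-sp)$.  In your approach the constraint $\tau\le 2^{-n-1}$ plays no structural role (you only need $\tau$ below the De~Giorgi iteration threshold), and you avoid the auxiliary Poincar\'e-type inequality altogether by adding $\|w_j\|_{L^p}^p$ by hand; this is more elementary and self-contained.  The paper's route, on the other hand, isolates the measure-theoretic ingredient cleanly and produces a recursion that does not require the critical embedding exponent~$p^*_s$, which can be convenient when one later wants uniformity as $s\nearrow 1$ or needs to cover the case $n=sp$.
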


\begin{proof}
Let~$\delta \le h < k \le 2 \delta$ and~$1 \le \rho < r \le 2$ be fixed, and~$\tau \in (0, 2^{- n - 1}]$ to be later taken small. Setting~$z_k := (u - k)_-$, we first observe that, by~\eqref{denstausmall} and the fact that~$\tau \le 2^{- n - 1}$, it holds
$$
\left| B_\rho \cap \{ z_k = 0 \} \right| = \left| B_\rho \setminus \{ u < k \} \right| \ge | B_\rho | - \left| B_\rho \cap \{ u < 2 \delta\} \right| \ge | B_\rho | - \tau | B_2 | \ge |B_\rho| / 2.
$$
By this, we may apply the fractional Sobolev inequality for functions that vanish over a set with positive density (see, e.g.,~\cite[Corollary~4.9]{C17}) and get that
\begin{align*}
(k - h) |A^-(h, \rho)|^{\frac{2 n - s}{2n}} & \le \left( \int_{B_{\rho}} z_k(x)^{\frac{2 n}{2 n - s}} \, dx \right)^{\frac{2n - s}{2n}} \le C \int_{A^-(k, \rho)} \int_{B_\rho} \frac{|z_k(x) - z_k(y)|}{|x - y|^{n + \frac{s}{2}}} \, dx dy \\
& \le C |A^-(k, \rho)|^\frac{p - 1}{p} [z_k]_{W^{s, p}(B_\rho)},
\end{align*}
for some constant~$C \ge 1$ depending only on~$n$,~$s$, and~$p$. Note that the last estimate follows by H\"older's inequality---see, e.g.,~\cite[Lemma~4.6]{C17} for the detailed computation. Taking advantage of~\eqref{wDG+-def}, we further obtain that
\begin{align*}
& (k - h)^p |A^-(h, \rho)|^{\frac{(2 n - s) p}{2n}} \\
& \hspace{30pt} \le C |A^-(k, \rho)|^{p - 1}
\left\{ d^p |A^-(k, r)| + \frac{\| z_k \|_{L^p(B_r)}^p}{(r - \rho)^p} + \frac{\| z_k \|_{L^1(B_r)} \overline{\Tail}_{s, p}(z_k; \rho)^{p - 1}}{(r - \rho)^{n + s p}} \right\},
\end{align*}
where~$C$ may now also depend on~$H$ and~$\lambda$. Now, thanks to assumption~\eqref{deltasmall1}, the non-negativity of~$u$ in~$B_2$, and the fact that~$\delta \le k$, from the previous inequality we easily deduce that
$$
|A^-(h, \rho)|^{\frac{2n - s}{2n}} \le \frac{C \, k}{(r - \rho)^{\frac{n + p}{p}} (k - h)} \, |A^-(k, r)|.
$$
By evaluating this inequality along two sequences of radii~$\{ \rho_i \}$ and of levels~$\{ k_i \}$---exponentially decreasing from~$2$ to~$1$ and from~$2 \delta$ to~$\delta$, respectively---and arguing as in the last part of the proof of Proposition~\ref{ulocboundprop}, we are led to the conclusion~\eqref{ugedelta1}, provided~$\tau$ is chosen sufficiently small.
\end{proof}

The second step in the proof of Lemma~\ref{growthlem} is represented by the next result. Unlike Lemma~\ref{stepIIlem}, it heavily relies on the presence of the second term on the left-hand side of~\eqref{DG+-def} and, therefore, it only holds true for functions in the smaller class~$\DGm$.

\begin{lemma} \label{stepIlem}
Let~$u \in \DGm(B_4; d, H, \lambda)$ for some~$d, \lambda \ge 0$,~$H \ge 1$. For every~$\tau \in (0, 1)$, there exists~$\delta \in (0, 1/8]$, depending only on~$n$,~$s$,~$p$,~$H$,~$\lambda$, and~$\tau$, such that if~$u \ge 0$ in~$B_4$,
\begin{equation} \label{uge1dens1}
\left| B_2 \cap \{ u \ge 1 \} \right| \ge \frac{1}{2} \left| B_2 \right|,
\end{equation}
and
$$
d + \Tail_{s,p}(u_-; 4) \le \delta,
$$
then
\begin{equation} \label{u<2deltadens}
\left| B_2 \cap \{ u < 2 \delta \} \right| \le \tau \left| B_2 \right|.
\end{equation}
\end{lemma}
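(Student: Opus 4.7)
The plan is to apply the strong $\DGm$ Caccioppoli-type inequality~\eqref{DG+-def} at a single well-chosen level $k$ comparable to $\delta$, and to extract the desired density estimate from the extra purely nonlocal summand on its left-hand side. Write $m := |B_2 \cap \{u < 2\delta\}|$ for the quantity that has to be bounded by $\tau |B_2|$.

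First I would specialize~\eqref{DG+-def} at the level $k := 4\delta$ (admissible since $\delta \le 1/8$ forces $k \le 1/2$), on the concentric balls $r = 2 < R = 3 < 4$. As $u \ge 0$ in $B_4$, one has $(u-k)_- \le k$ pointwise in $B_4$. Splitting the integral defining $\overline{\Tail}_{s,p}((u-k)_-; 0, 2)^{p-1}$ into the contribution over $B_4 \setminus B_2$ (where the previous pointwise bound is enough) and the one over $\R^n \setminus B_4$ (where $(u-k)_- \le k + u_-$ and the assumption $\Tail_{s,p}(u_-; 4) \le \delta$ enter), one obtains
$$
\overline{\Tail}_{s,p}((u-k)_-; 0, 2)^{p-1} \le C\bigl(k^{p-1} + \delta^{p-1}\bigr).
$$
Combining this with the trivial bounds $\|(u-k)_-\|_{L^p(B_3)}^p \le k^p |B_3|$, $\|(u-k)_-\|_{L^1(B_3)} \le k|B_3|$, $|A^-(k, 3)| \le |B_3|$, and with $d \le \delta$, the right-hand side of~\eqref{DG+-def} becomes dominated by $C(\delta^p + k^p + k\delta^{p-1})$, which reduces to $C'\delta^p$ precisely because of the choice $k = 4\delta$.

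Discarding the non-negative Gagliardo seminorm on the left-hand side now leaves the purely nonlocal estimate
$$
T := \int_{B_2} (u(x)-k)_- \int_{\R^n} \frac{(u(y)-k)_+^{p-1}}{|x-y|^{n+sp}}\,dy\,dx \le C'\delta^p.
$$
For a lower bound on $T$, I would restrict the $x$-integration to $B_2 \cap \{u < 2\delta\}$, where $(u(x)-k)_- \ge k - 2\delta = 2\delta$, and the $y$-integration to $B_2 \cap \{u \ge 1\}$, where $(u(y)-k)_+ \ge 1-k \ge 1/2$. The crude bound $|x-y| \le 4$ for $x,y \in B_2$ together with the density hypothesis~\eqref{uge1dens1} then yields $T \ge c\,\delta\,m$ with $c > 0$ depending only on $n$, $s$, $p$.

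Chaining the two inequalities gives $m \le C\delta^{p-1}$, and, since $p > 1$, choosing $\delta$ small enough in terms of $\tau$, $n$, $s$, $p$, $H$, $\lambda$ forces $m \le \tau|B_2|$, which is exactly~\eqref{u<2deltadens}. I expect the main technical obstacle to be the careful bookkeeping of the tail estimate: one has to track both the dependence on $k$ (coming from $(u-k)_- \le k$ inside $B_4$) and the dependence on $\delta$ (coming from $\Tail_{s,p}(u_-; 4) \le \delta$ outside $B_4$), and then verify that the mixed term $k\delta^{p-1}$ is actually absorbed by $\delta^p$ only thanks to the proportionality $k = 4\delta$. Conceptually, however, the whole argument is a one-shot application of the extra nonlocal summand peculiar to the \emph{strong} class $\DGm$; this is exactly the feature that is unavailable in Lemma~\ref{stepIIlem} and that forces the latter to rely on the density assumption~\eqref{denstausmall} instead.
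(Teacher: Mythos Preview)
Your proposal is correct and follows essentially the same route as the paper: apply~\eqref{DG+-def} with $x_0=0$, $r=2$, $R=3$, $k=4\delta$, bound the right-hand side by $C\delta^p$ via the tail splitting you describe, and bound the nonlocal left-hand term below by $c\,\delta\,m$ by restricting both integrals to~$B_2$. The paper's proof is terser (it defers the right-hand side estimate to the argument of Lemma~\ref{stepIIlem}), but the choice of level, radii, and the use of the extra nonlocal summand are identical.
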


\begin{proof}
We apply~\eqref{DG+-def} with~$x_0 = 0$,~$r = 2$,~$R = 3$, and~$k = 4 \delta$, for some~$\delta \in (0, 1/8]$ to be determined. By arguing as in the last part of the proof of Lemma~\ref{stepIIlem}, it is easy to see that the right-hand side of~\eqref{DG+-def} can be controlled from above by~$C \delta^p$, for some constant~$C \ge 1$ depending only on~$n$,~$s$,~$p$,~$H$, and~$\lambda$. On the other hand, its left-hand side---and, in particular, its second summand---is larger than
$$
\int_{B_2} (4 \delta - u(x))_+ \left\{ \int_{B_2} \frac{ (u(y) - 4 \delta)_+^{p - 1}}{|x - y|^{n + s p}} \, dy \right\} dx \ge c \, \delta \left| B_2 \cap \{ u < 2 \delta \} \right| \left| B_2 \cap \{ u \ge 1 \} \right|,
$$
for some constant~$c \in (0, 1)$ depending only on~$n$,~$s$, and~$p$. By combining together these two facts and recalling hypothesis~\eqref{uge1dens1}, we deduce that~$\left| B_2 \cap \{ u < 2 \delta \} \right| \le C \delta^{p - 1}$, from which~\eqref{u<2deltadens} readily follows, provided~$\delta$ is small enough.
\end{proof}

Since, by scaling, we may reduce ourselves to the case of~$R = 1$, it is clear that the joint application of Lemmata~\ref{stepIIlem} and~\ref{stepIlem} leads to Lemma~\ref{growthlem}.

The growth lemma is the key ingredient of another important result valid for the elements of the class~$\DG$: the Harnack inequality.

\begin{theorem}[\bfseries Harnack inequality for~$\DG$] \label{DGharthm}
Let~$u \in \DG(\Omega; d, H, \lambda)$ for some constants~$d, \lambda \ge 0$ and~$H \ge 1$. There exists a constant~$C \ge 1$, depending on~$n$,~$s$,~$p$,~$\lambda$, and~$H$, such that, if~$u \ge 0$ in~$\Omega$, then
\begin{equation} \label{DGhar}
\sup_{B_R(x_0)} u + \Tail_{s, p}(u_+; x_0, R) \le C \left( \inf_{B_R(x_0)} u + \Tail_{s, p}(u_-; x_0, R) + R^{\frac{\lambda + s p}{p}} d \right)
\end{equation}
for every~$x_0 \in \Omega$ and~$0 < R < \dist (x_0, \partial \Omega) / 2$.
\end{theorem}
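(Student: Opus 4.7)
The plan is to obtain~\eqref{DGhar} by combining a weak-Harnack $L^\varepsilon$-estimate, derived from the growth Lemma~\ref{growthlem}, with the local supremum bound of Proposition~\ref{ulocboundprop}, and then closing the argument via a dyadic iteration that absorbs the tail of~$u_+$ onto the left-hand side.

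The core new input is the following weak-Harnack inequality: there exist $\varepsilon \in (0, 1)$ and $C \geq 1$, depending only on $n, s, p, H$, and $\lambda$, such that
\begin{equation*}
\biggl( \dashint_{B_R(x_0)} u^\varepsilon \, dx \biggr)^{\!1/\varepsilon} \leq C \biggl( \inf_{B_{R/2}(x_0)} u + \Tail_{s,p}(u_-; x_0, R) + R^{\frac{\lambda+sp}{p}} d \biggr).
\end{equation*}
To prove this I would first normalize $u$ by dividing it by the right-hand side above, so that the rescaled function has infimum at most one and a uniformly small negative tail and $d$-contribution on every sub-ball. Then, applying Lemma~\ref{growthlem} at all centers and scales in $B_R$, I would argue as follows: whenever a dyadic cube $Q \subset B_R$ has its superlevel set $\{u \geq 2^k\}$ occupying more than half of $|Q|$, the growth lemma propagates the lower bound $u \geq \delta \cdot 2^k$ to a concentric sub-cube. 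Organizing these cubes through a Krylov--Safonov / Calder\'on--Zygmund decomposition yields the geometric decay $|\{u \geq 2^k\} \cap B_R| \leq c \mu^k |B_R|$ for some $\mu \in (0, 1)$, which, after summation in $k$, is equivalent to the desired $L^\varepsilon$-bound for any $\varepsilon < - \log_2 \mu$.

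To pass from the weak to the full Harnack I would upgrade Proposition~\ref{ulocboundprop} by running its De~Giorgi iteration on nested balls $B_\rho \subset B_\tau$ with $R/2 \leq \rho < \tau \leq R$, obtaining
\begin{equation*}
\sup_{B_\rho(x_0)} u \leq \frac{C}{(\tau-\rho)^{n/p}} \| u_+ \|_{L^p(B_\tau(x_0))} + C \Tail_{s,p}(u_+; x_0, \rho) + C R^{\frac{\lambda+sp}{p}} d .
\end{equation*}
Interpolating $\| u_+ \|_{L^p(B_\tau)} \leq \bigl( \sup_{B_\tau} u \bigr)^{1-\varepsilon/p} \| u_+ \|_{L^\varepsilon(B_\tau)}^{\varepsilon/p}$, applying Young's inequality to absorb the factor $\bigl( \sup_{B_\tau} u \bigr)^{1-\varepsilon/p}$ into a half of $\sup_{B_\tau} u$, and invoking the standard iteration lemma~\cite[Lemma~7.1]{Giu03} to eliminate the geometric factor in the radii, I would arrive at an $L^\varepsilon$-version of the supremum bound on $B_{R/2}$. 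Combining it with the weak-Harnack inequality yields the pre-Harnack estimate
\begin{equation*}
\sup_{B_{R/2}(x_0)} u \leq C \biggl( \inf_{B_{R/2}(x_0)} u + \Tail_{s,p}(u_-; x_0, R) + \Tail_{s,p}(u_+; x_0, R/2) + R^{\frac{\lambda+sp}{p}} d \biggr) .
\end{equation*}

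The last step is to absorb $\Tail_{s,p}(u_+; x_0, R/2)$ onto the left-hand side. Exploiting that $u \geq 0$ in~$\Omega$, one splits the tail of~$u_+$ at scale~$R/2$ into dyadic annuli $B_{2^{k+1} R}(x_0) \setminus B_{2^k R}(x_0)$, controls the contribution of each annulus by the supremum of $u$ on the corresponding larger ball through the pre-Harnack estimate applied at scale $2^{k+1} R$, and then sums the resulting geometric series of ratio $2^{-sp}$. A minor rearrangement puts $\Tail_{s,p}(u_+; x_0, R)$ on the left and completes~\eqref{DGhar}. The principal technical obstacle is the weak-Harnack step: in the nonlocal setting the Krylov--Safonov covering argument interacts delicately with the tail terms, and at each scale of the iteration one must carefully verify that the negative tail of the normalized function remains dominated by a universal constant, so that Lemma~\ref{growthlem} can be invoked.
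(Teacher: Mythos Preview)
Your first two steps---deriving the weak Harnack inequality from Lemma~\ref{growthlem} via a Krylov--Safonov covering, and upgrading Proposition~\ref{ulocboundprop} to an $L^\varepsilon$--$L^\infty$ bound by interpolation and iteration---match the paper's strategy exactly. The difficulty is your final step, the absorption of $\Tail_{s,p}(u_+;x_0,R/2)$.

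Your proposed dyadic argument cannot work as stated. You want to control the contribution of each annulus $B_{2^{k+1}R}\setminus B_{2^kR}$ by applying the pre-Harnack estimate at scale $2^{k+1}R$, but the hypotheses only give $u\in\DG(\Omega;\ldots)$ with $B_{2R}(x_0)\subset\Omega$; for $k\ge 1$ the balls $B_{2^{k+1}R}(x_0)$ need not lie in $\Omega$, so neither the class membership nor the non-negativity of $u$ is available there, and the pre-Harnack estimate simply does not apply. Even formally, each application would reproduce a term $\Tail_{s,p}(u_+;x_0,2^kR)$ on the right, so the scheme is circular rather than summable. Note also that the portion of $\Tail_{s,p}(u_+;x_0,R)$ coming from $\R^n\setminus\Omega$ is completely unrelated to $\Tail_{s,p}(u_-;x_0,R)$ and cannot be handled by any covering inside $\Omega$.

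The paper closes this gap with a single-scale estimate, Lemma~\ref{taillem}, which bounds $\Tail_{s,p}(u_+;x_0,R)$ by $C\bigl(\sup_{B_R}u+\Tail_{s,p}(u_-;x_0,R)+R^{(\lambda+sp)/p}d\bigr)$. Its proof uses precisely the extra nonlocal term on the left of~\eqref{DG+-def}: choosing $k=2M$ with $M=\sup_{B_R}u+R^{(\lambda+sp)/p}d$, one has $(u-k)_-\ge M$ throughout $B_{R/2}$, while the inner integral $\int_{\R^n}(u(y)-k)_+^{p-1}|x-y|^{-n-sp}\,dy$ captures the full tail of $u_+$ over $\R^n\setminus B_R$. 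Combining this lemma with the improved supremum bound (taken with a small coefficient in front of the tail so that $\sup_{B_R}u$ can be reabsorbed) and the weak Harnack inequality yields~\eqref{DGhar}. This is the step where the strong class $\DG$ is genuinely needed beyond the H\"older estimate, and it replaces your dyadic iteration entirely.
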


Notice the presence of tail terms on both sides of the inequality. The one on the right accounts for the possible negativity of~$u$ outside of~$\Omega$ and cannot be removed, as it was noticed by Kassmann~\cite{Kas11} for~$s$-harmonic functions. Conversely, the one on the left makes the inequality stronger. To the best of our knowledge, the possibility of including such a term was first realized by Ros-Oton \& Serra~\cite{RS16} in the case of the \emph{weak} Harnack inequality (see the forthcoming Theorem~\ref{DGweakHarthm}) for supersolutions of fully nonlinear nonlocal equations. We also mention the recent~\cite{CC17}, by Cabr\'e and the author of this note, where the presence of this extra term is crucially exploited to obtain a gradient bound for nonlocal minimal graphs.

As for the H\"older continuity result, Theorem~\ref{DGharthm} does not hold for the elements of the larger class~$\wDG$ when~$s p < 1$, in view of Proposition~\ref{chiinwDGprop}.

To obtain Theorem~\ref{DGharthm}, we first establish the aforementioned weak Harnack inequality for the class~$\DGm$.

\begin{theorem}[\bfseries Weak Harnack inequality for~$\DGm$] \label{DGweakHarthm}
Let~$u \in \DGm(\Omega; d, H, \lambda)$ for some~$d, \lambda \ge 0$ and~$H \ge 1$. There exist an exponent~$\varepsilon > 0$ and a constant~$C \ge 1$, both depending only on~$n$,~$s$,~$p$,~$\lambda$, and~$H$, such that, if~$u \ge 0$ in~$\Omega$, then
\begin{equation} \label{DGweakHar}
\left( \dashint_{B_R(x_0)} u(x)^\varepsilon \, dx \right)^{\frac{1}{\varepsilon}} \le C \left( \inf_{B_R(x_0)} u + \Tail_{s, p}(u_-; x_0, R) + R^{\frac{\lambda + s p}{p}} d \right)
\end{equation}
for every~$x_0 \in \Omega$ and~$0 < R < \dist (x_0, \partial \Omega) / 2$.
\end{theorem}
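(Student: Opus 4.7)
The plan is to derive the weak Harnack inequality from the growth lemma (Lemma \ref{growthlem}) via a level-set decay estimate of Krylov--Safonov type, in the spirit of the classical proof for local De Giorgi classes by DiBenedetto--Trudinger and its nonlocal adaptations (see, e.g., \cite{DKP16,CCV11}). After translating and rescaling so that $x_0 = 0$ and $R = 1$, I fix an auxiliary $\sigma > 0$ and set
\[
K_\sigma := \inf_{B_1} u + \Tail_{s,p}(u_-; 1) + d + \sigma,
\]
so that the rescaled function $v := u/K_\sigma$ is non-negative in $B_1$, belongs to $\DGm(B_1; d', H, \lambda)$ with $d' := d/K_\sigma \le 1$ (using the linear scaling of \eqref{DG+-def} in $u$), and satisfies $\inf_{B_1} v \le 1$ together with $\Tail_{s,p}(v_-; 1) \le 1$. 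Proving $\bigl( \dashint_{B_{1/2}} v^\varepsilon \,dx \bigr)^{1/\varepsilon} \le C$ uniformly in $\sigma$ and letting $\sigma \to 0^+$ then yields \eqref{DGweakHar}.

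The core estimate is the geometric decay
\[
\bigl| B_{1/2} \cap \{v > M_0^k\} \bigr| \le \theta^k |B_{1/2}| \qquad \text{for every } k \in \N,
\]
with constants $M_0 > 1$ and $\theta \in (0, 1)$ depending only on $n$, $s$, $p$, $H$, $\lambda$. Once established, a layer-cake decomposition delivers the $L^\varepsilon$ bound for any $\varepsilon \in (0, \log(1/\theta)/\log M_0)$. To prove the decay, I would apply a standard Calder\'on--Zygmund--Krylov--Safonov covering lemma to the nested sets $E_k := B_{1/2} \cap \{v > M_0^{k+1}\} \subset F_k := B_{1/2} \cap \{v > M_0^k\}$. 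The covering lemma reduces the decay $|E_k| \le \theta |F_k|$ to the pointwise implication: whenever $B_\rho(y) \subset B_{1/2}$ satisfies $|E_k \cap B_\rho(y)| \ge \tfrac{1}{2} |B_\rho(y)|$, a prescribed dilate of $B_\rho(y)$ must lie in $F_k$. This implication is precisely what the growth lemma provides when applied to the rescaled function $\tilde v := v/M_0^{k+1}$ on $B_{4\rho}(y)$: the density hypothesis yields $\tilde v \ge \delta$ on $B_\rho(y)$, i.e., $v \ge \delta M_0^{k+1} \ge M_0^k$ there whenever $M_0 \ge 1/\delta$; a short further iteration of Lemma \ref{growthlem} then propagates this pointwise bound to the required dilate.

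The main technical obstacle is verifying, uniformly in $k$ and $\rho$, the smallness hypothesis of Lemma \ref{growthlem}, which for $\tilde v \in \DGm(B_1; d'/M_0^{k+1}, H, \lambda)$ reads
\[
\rho^{(\lambda+sp)/p} d'/M_0^{k+1} + \Tail_{s,p}(\tilde v_-; y, 4\rho) \le \delta.
\]
Since $\rho \le 1$ and $d' \le 1$, the first summand is at most $M_0^{-(k+1)}$, controllable by choosing $M_0$ large. For the tail, the crucial observation is that $v_-$ vanishes on $B_1$, so the integral defining the tail is concentrated outside $B_1$; exploiting $|x - y| \ge |x|/2$ for $|x| \ge 1$ and $|y| \le 1/2$, one obtains $\Tail_{s,p}(\tilde v_-; y, 4\rho) \le C \rho^{sp/(p-1)} M_0^{-(k+1)} \Tail_{s,p}(v_-; 1) \le C M_0^{-(k+1)}$, again absorbed by $\delta/2$ for $M_0$ sufficiently large. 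Thus $M_0$ is fixed once and for all in terms of the structural constants, after which the iteration runs cleanly.
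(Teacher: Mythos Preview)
Your proposal is correct and follows essentially the same route the paper indicates: a scaled application of the growth lemma (Lemma~\ref{growthlem}) combined with a Krylov--Safonov covering argument in the style of DiBenedetto--Trudinger~\cite{DT84}, with the nonlocal tail controlled exactly as you describe (using that~$v_-$ vanishes on the rescaled domain). The paper does not spell out the details here either, deferring to~\cite[Subsection~6.4]{C17}; the only point worth tightening in your write-up is to ensure that the balls~$B_{4\rho}(y)$ on which you invoke the growth lemma stay inside the region where~$v \ge 0$ and~$v \in \DGm$, which amounts to restricting the Calder\'on--Zygmund radii so that these balls remain in~$B_2$ (recall~$B_{2R}(x_0) \subset \Omega$).
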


For the sake of conciseness, we do not include here the proof of Theorem~\ref{DGweakHarthm}, which essentially relies on a scaled version of Lemma~\ref{growthlem} along with a Krylov-Safonov-type covering lemma. This argument is similar to the one developed in~\cite[Section~3]{DT84} for classical~De~Giorgi classes and can be found in~\cite[Subsection~6.4]{C17}.

\begin{question}
Is it possible to establish a weak Harnack inequality for functions in~$\DG_-$ identical in structure to those of~\cite[Theorem~2.2]{RS16} and~\cite[Theorem~1.6]{CC17}? That is, does~\eqref{DGweakHar} hold with~$\varepsilon = 1$ and with the additional term~$\Tail_{s, p}(u_+; x_0, R)$ added on the left-hand side, such as in~\eqref{DGhar}?
\end{question}

The full Harnack inequality of Theorem~\ref{DGharthm} follows in an almost straightforward way by putting together Theorem~\ref{DGweakHarthm}, (a slightly improved version of) Proposition~\ref{ulocboundprop}, and the next result---again, see~\cite[Subsection~6.4]{C17} for all the details of this argument.

\begin{lemma} \label{taillem}
Let~$u \in \DGm(B_R; d, H, \lambda)$ for some~$d, \lambda \ge 0$,~$H \ge 1$, and~$R > 0$. There is a constant~$C \ge 1$, depending only on~$n$,~$s$,~$p$, and~$H$, such that, if~$u \ge 0$ in~$B_R$, then
$$
\Tail_{s, p}(u_+; R) \le C \left( \sup_{B_R} u + \Tail_{s, p}(u_-; R) + R^{\frac{\lambda + s p}{p}} d \right).
$$
\end{lemma}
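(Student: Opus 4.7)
The central idea is to apply the strong fractional De Giorgi inequality~\eqref{DG+-def} for $(u-k)_-$ at a single well-chosen level $k > \sup_{B_R} u$, exploiting the second summand on its left-hand side---the genuinely nonlocal term that distinguishes $\DG$ from $\wDG$---to reconstruct a tail contribution of $u_+$ outside $B_R$.

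More precisely, I would translate $x_0$ to the origin, set $M := \sup_{B_R} u$, $T := \Tail_{s,p}(u_-; R)$, $D := R^{(\lambda+sp)/p} d$, and choose $k := 2(M+T+D)$; the degenerate case $M+T+D = 0$ can be handled by letting $k \to 0^+$ in the same estimate. I then apply~\eqref{DG+-def} for $(u-k)_-$ with concentric radii $R/2$ and $3R/4$. Since $u \le M$ on $B_R$ and $k \ge 2M$, one has $(u-k)_- \ge k/2$ throughout $B_{R/2}$; hence, dropping the non-negative seminorm term, restricting the inner integration to $y \in \R^n \setminus B_R$, and using the elementary bound $|x-y| \le \tfrac{3}{2}|y|$ whenever $|x| \le R/2 \le |y|$, the second left-hand summand of~\eqref{DG+-def} is bounded from below by
\[
c \, k \, R^{n-sp} \, \Tail_{s,p}((u-k)_+; R)^{p-1}
\]
for some dimensional constant $c > 0$.

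On the right-hand side of~\eqref{DG+-def}, the first two summands are at most $C R^{\lambda+n} d^p$ and $C R^{n-sp} k^p$, respectively, using $(u-k)_- \le k$ on $B_R$. The tail summand is the only subtle one: the key input is the pointwise inequality $(u-k)_- \le k + u_-$ on all of $\R^n$ (immediate from $(u-k)_- = (k-u)_+$ and $u = u_+ - u_-$), which, combined with the splitting $\R^n \setminus B_{R/2} = (B_R \setminus B_{R/2}) \cup (\R^n \setminus B_R)$ and the fact that $u_- \equiv 0$ on $B_R$, gives $\overline{\Tail}_{s,p}((u-k)_-; R/2)^{p-1} \le C R^{-sp}(k^{p-1} + T^{p-1})$, and hence bounds the third summand by $C R^{n-sp}(k^p + k T^{p-1})$. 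Combining these estimates, dividing through by $c k R^{n-sp}$, and using $k \asymp M+T+D$ to simplify, I obtain $\Tail_{s,p}((u-k)_+; R)^{p-1} \le C(M+T+D)^{p-1}$. The conclusion then follows from the pointwise bound $u_+ \le (u-k)_+ + k$, which yields $\Tail_{s,p}(u_+; R) \le C \Tail_{s,p}((u-k)_+; R) + C k \le C(M+T+D)$. The only delicate point I foresee is the bookkeeping in the estimate of the tail summand: one must carefully separate the annular contribution on $B_R \setminus B_{R/2}$ (to be absorbed into $k^{p-1}$) from the truly exterior contribution on $\R^n \setminus B_R$ (to be absorbed into $T^{p-1}$), while tracking the correct $R$-scaling throughout. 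Once this splitting is in place, the rest of the argument is essentially routine Caccioppoli-type manipulation in the spirit of Proposition~\ref{ulocboundprop}.
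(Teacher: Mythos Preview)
Your proposal is correct and follows essentially the same route as the paper's proof: apply the $\DGm$ inequality~\eqref{DG+-def} at a level $k$ exceeding $\sup_{B_R}u$, drop the seminorm, and use the genuinely nonlocal second term on the left to reconstruct the exterior tail of $u_+$, while bounding the right-hand side by elementary pointwise estimates on $(u-k)_-$. The only differences are cosmetic: the paper takes $k=2M$ with $M:=\sup_{B_R}u+R^{(\lambda+sp)/p}d$ and radii $r=R/2$, outer radius (essentially) $R$, and it passes from $(u-2M)_+$ to $u_+$ already inside the lower bound (via $(u-2M)_+^{p-1}\ge c\,u_+^{p-1}-C\,M^{p-1}$), whereas you include $T$ in the level, use outer radius $3R/4$, and perform the passage $u_+\le(u-k)_++k$ at the very end; neither choice changes the substance. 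One tiny slip: the bound $|x-y|\le\tfrac{3}{2}|y|$ requires $|y|\ge R$ (which is exactly your restriction $y\in\R^n\setminus B_R$), not merely $|y|\ge R/2$ as written.
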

\begin{proof}
It suffices to apply inequality~\eqref{DG+-def} with~$x_0 = 0$,~$r = R/2$, and~$k = 2 M$, where we set~$M := \sup_{B_R} u + R^{(\lambda + s p ) / p} d$. On the one hand, it is not hard to see that
\begin{align*}
& \int_{B_{R / 2}} (u(x) - 2 M)_- \left\{ \int_{\R^n} \frac{ (u(y) - 2 M)_+^{p - 1}}{|x - y|^{n + s p}} \, dy \right\} dx \\
& \hspace{30pt} \ge \frac{M R^n}{C} \int_{\R^n \setminus B_R} \frac{ (u(y) - 2 M)_+^{p - 1}}{|y|^{n + s p}} \, dy \ge R^{n - s p} \left\{ \frac{M}{C} \Tail_{s, p}(u_+; R)^{p - 1} - C M^p \right\},
\end{align*}
for some constant~$C \ge 1$ depending only on~$n$,~$s$, and~$p$. On the other hand, the right-hand side of~\eqref{DG+-def} is controlled by~$C R^{n - s p} \left\{ M^p + M \Tail_{s, p}(u_-; R)^{p - 1} \right\}$, with~$C$ now depending on~$H$ as well. The lemma then plainly follows by comparing these two expressions.
\end{proof}

We conclude the section with a comment on the stability of the results that we just presented in the limit as~$s \nearrow 1$.

Essentially all the estimates that we obtained can be made uniform with respect to this limit---that is, the constants that govern them can be chosen to be independent of~$s$, for~$s$ bounded away from zero---, provided a couple of changes in the definitions of fractional~De~Giorgi classes are carried out: one needs to replace~$H$ with~$H / (1 - s)$ in both~\eqref{wDG+-def} and~\eqref{DG+-def}, and to correct the definition of the tail term by adding the factor~$(1 - s)$ in front of the integral that appears, within round brackets, on the right-hand side of~\eqref{taildef}. After these modifications, all results are uniform as~$s \nearrow 1$ and coherent with those that are known for classical~De~Giorgi classes. See~\cite{C17} for the precise statements.

Such uniformity can be achieved mostly by keeping track of the dependence in~$s$ of all the constants involved in the various results. Everything goes through with little effort besides one point: the behavior of the constant~$\delta$ in Lemma~\ref{stepIlem}. As can be easily checked, the proof of Lemma~\ref{stepIlem} is based exclusively on the estimate for the second term on the left-hand side of~\eqref{DG+-def}, a purely nonlocal quantity that, when multiplied by~$(1 - s)$, vanishes in the limit as~$s \nearrow 1$. As a result, the proof of Lemma~\ref{stepIlem} is not uniform in~$s$ as it is. To make it uniform, one can interpolate such proof with an argument closer in spirit to one that leads to the growth lemma for classical~De~Giorgi classes, such as~\cite[Lemma~7.5]{Giu03}.

A key element of the proof of this classical result is an isoperimetric-type inequality for the level sets of functions in~$W^{1, p}$ due to~De~Giorgi~\cite{DeG57}---see, e.g.,~\cite{CV12} for its statement when~$p = 2$ and~\cite[Lemma~5.2]{C17} for the general case. Next is a partial extension of this inequality to the fractional Sobolev space~$W^{s, p}$, when~$s$ is close to~$1$.

\begin{proposition} \label{sDGlemprop}
Let~$n \ge 2$,~$M > 0$, and~$\gamma \in (0, 1)$. There exist two constants~$\bar{s} \in (0, 1)$ and~$C > 0$ such that the inequality
\begin{align*}
& \Big\{ |B_1 \cap \{ u \le 0 \}| | B_1 \cap \{ u \ge 1 \} | \Big\}^{\frac{n - 1}{n}}  \le C (1 - s)^{1 / p} [u]_{W^{s, p}(B_1)} |B_1 \cap \{ 0 < u < 1 \}|^{\frac{p - 1}{p}}
\end{align*}
holds true for every~$s \in [\bar{s}, 1)$ and every function~$u \in W^{s, p}(B_1)$ satisfying
\begin{equation} \label{quantass}
\begin{gathered}
\| u \|_{L^p(B_1)}^p + (1 - s) [u]_{W^{s, p}(B_1)}^p \le M, \\
|B_1 \cap \{ u \le 0 \}| \ge \gamma |B_1| \quad \mbox{and} \quad |B_1 \cap \{ u \ge 1 \}| \ge \gamma |B_1|.
\end{gathered}
\end{equation}
The constant~$C$ depends only on~$n$ and~$p$, while~$\bar{s}$ also depends on~$M$ and~$\gamma$.
\end{proposition}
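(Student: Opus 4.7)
The plan is to argue by contradiction, reducing the claim in the limit $s \nearrow 1$ to the classical De~Giorgi isoperimetric inequality for $W^{1,p}$ functions through a Bourgain--Brezis--Mironescu-type compactness.

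Suppose the proposition is false. Then for every $k \in \N$ one can find $s_k \in [1 - 1/k, 1)$ and $u_k \in W^{s_k,p}(B_1)$ obeying \eqref{quantass} with the prescribed $M$ and $\gamma$, but such that
\[
\bigl\{ |B_1 \cap \{u_k \le 0\}| \, |B_1 \cap \{u_k \ge 1\}| \bigr\}^{\frac{n-1}{n}}
> k \, (1 - s_k)^{1/p} \, [u_k]_{W^{s_k,p}(B_1)} \, |B_1 \cap \{0 < u_k < 1\}|^{\frac{p-1}{p}}.
\]
The uniform bounds $\|u_k\|_{L^p(B_1)}^p + (1 - s_k)[u_k]_{W^{s_k,p}(B_1)}^p \le M$ combined with $s_k \nearrow 1$ meet the hypotheses of a compactness theorem of Ponce (an offshoot of the asymptotics of Bourgain, Brezis, and Mironescu): along a subsequence, $u_k$ converges strongly in $L^p(B_1)$ and almost everywhere to some $u_\infty \in W^{1,p}(B_1)$, with
\[
\|\nabla u_\infty\|_{L^p(B_1)}^p \le C_\ast \liminf_{k \to \infty} (1 - s_k)[u_k]_{W^{s_k,p}(B_1)}^p, \qquad C_\ast = C_\ast(n,p).
\]

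Next I would transfer the density hypotheses on $u_k$ to $u_\infty$. Since the strict inequalities $\{u > 0\}$ and $\{u < 1\}$ are open, pointwise convergence yields $\limsup_k \chi_{\{u_k \le 0\}} \le \chi_{\{u_\infty \le 0\}}$ and $\limsup_k \chi_{\{u_k \ge 1\}} \le \chi_{\{u_\infty \ge 1\}}$ a.e.\ in $B_1$, so reverse Fatou gives $|B_1 \cap \{u_\infty \le 0\}| \ge \gamma |B_1|$ and $|B_1 \cap \{u_\infty \ge 1\}| \ge \gamma |B_1|$. A dual application of the standard Fatou lemma shows $|B_1 \cap \{0 < u_\infty < 1\}| \le \liminf_k |B_1 \cap \{0 < u_k < 1\}|$.

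I would then conclude by a dichotomy. Since the left-hand side of the failure inequality is at most $|B_1|^{2(n-1)/n}$, the product $(1 - s_k)^{1/p}[u_k]_{W^{s_k,p}(B_1)} \cdot |B_1 \cap \{0 < u_k < 1\}|^{(p-1)/p}$ tends to $0$, so, up to a further subsequence, either (i)~$(1 - s_k)^{1/p}[u_k]_{W^{s_k,p}(B_1)} \to 0$, or (ii)~$|B_1 \cap \{0 < u_k < 1\}| \to 0$. In case (i), the compactness estimate forces $\nabla u_\infty \equiv 0$, hence $u_\infty$ is constant on the connected set $B_1$; the transferred density bounds require this constant to be simultaneously $\le 0$ and $\ge 1$, which is absurd. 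In case (ii), the Fatou bound yields $|B_1 \cap \{0 < u_\infty < 1\}| = 0$, so the classical De~Giorgi isoperimetric inequality for $W^{1,p}$ functions applied to $u_\infty$ gives
\[
\bigl\{ |B_1 \cap \{u_\infty \le 0\}| \, |B_1 \cap \{u_\infty \ge 1\}| \bigr\}^{\frac{n-1}{n}} \le C \, \|\nabla u_\infty\|_{L^p(B_1)} \, |B_1 \cap \{0 < u_\infty < 1\}|^{\frac{p-1}{p}} = 0,
\]
again contradicting the density lower bounds.

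The only nontrivial ingredient---and therefore the main obstacle---is the Bourgain--Brezis--Mironescu--Ponce compactness estimate, which is solely responsible for converting the weighted fractional bound $(1 - s_k)[u_k]_{W^{s_k,p}}^p \le M$ into strong $L^p$-precompactness of $\{u_k\}$ and a $W^{1,p}$-limit whose gradient norm is controlled by the limsup of these energies. Everything else is standard measure-theoretic convergence paired with De~Giorgi's classical isoperimetric lemma.
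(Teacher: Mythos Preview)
Your argument is correct and follows essentially the same route as the paper's own proof: the paper explicitly states that Proposition~\ref{sDGlemprop} is proved in~\cite[Section~5]{C17} ``by contradiction and based on a compactness argument that relies on the aforementioned De~Giorgi's isoperimetric inequality in the Sobolev space~$W^{1,p}$,'' which is precisely what you do via the Bourgain--Brezis--Mironescu--Ponce compactness and the subsequent dichotomy.
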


The proof of Proposition~\ref{sDGlemprop} presented in~\cite[Section~5]{C17} is by contradiction and based on a compactness argument that relies on the aforementioned~De~Giorgi's isoperimetric inequality in the Sobolev space~$W^{1, p}$. As a result, the optimal value of~$\bar{s}$ is unknown, as well as its possible independence from~$M$ and~$\gamma$. However, it necessarily holds that~$\bar{s} \ge 1/p$, due to the fact that~$\chi_E \in W^{s, p}(B_1)$ for every~$s \in (0, 1/p)$ and every smooth set~$E \subset B_1$.

\begin{question}
Is it possible to obtain an inequality similar to the one of Proposition~\ref{sDGlemprop} for every function of the space~$W^{s, p}(B_1)$, every~$s \in [1/p, 1)$, and without assuming~\eqref{quantass}?
\end{question}

\section{Applications to minimizers of nonlocal functionals} \label{minsec}

In this section we present the main application of fractional~De~Giorgi classes, which ultimately motivates their introduction: the H\"older regularity of minimizers of possibly non-differentiable nonlocal functionals.

Let~$K: \R^n \times \R^n \to \R$ be a non-negative measurable function satisfying
\begin{equation} \label{Ksymm}
K(x, y) = K(y, x) \quad \mbox{for a.e.~} x, y \in \R^n
\end{equation}
and
\begin{equation} \label{Kbounds}
\frac{1}{\Lambda |x - y|^{n + s p}} \le K(x, y) \le \frac{\Lambda}{|x - y|^{n + s p}} \quad \mbox{for a.e.~} x, y \in \R^n, 
\end{equation}
for some constants~$s \in (0, 1)$,~$p > 1$, and~$\Lambda \ge 1$. Let~$F: \Omega \times \R \to \R$ be a Carath\'eodory function and assume that
\begin{equation} \label{Fbounds}
|F(x, u)| \le F_0 \quad \mbox{for a.e.~} x \in \Omega \mbox{ and every } u \in \R,
\end{equation}
for some constant~$F_0 \ge 0$. Associated to these two functions, we consider the energy functional~$\E$, defined on every measurable function~$u: \R^n \to \R$ by
$$
\E(u; \Omega) := \frac{1}{2p} \iint_{\C_\Omega} \left| u(x) - u(y) \right|^p K(x, y) \, dx dy + \int_\Omega F(x, u(x)) \, dx,
$$
where~$\C_{\Omega} := \R^{2 n} \setminus (\R^n \setminus \Omega)^2$. More general kernels~$K$ and unbounded potentials~$F$ (with subcritical growth in~$u$) can also be considered. For simplicity of exposition, here we restrict ourselves to those that are allowed by hypotheses~\eqref{Ksymm}-\eqref{Fbounds}. We refer the interested reader to~\cite{C17} for a broader setting.

The functional~$\E$ has been recently considered by several authors, since it allows to model nonlinear phenomena that occur in the presence of long-range interactions. Here, we are particularly interested in the case when~$F$ is not differentiable (and, perhaps, not even continuous) in the variable~$u$. Examples of such potentials have been considered for instance in~\cite{CRS10b}, with~$F(u) = \chi_{(0, +\infty)}(u)$, and in~\cite{CV17}, with~$F(u)$ comparable to~$|1 - u^2|^d$ for some~$d > 0$.

Notice that, under the sole assumption~\eqref{Fbounds}, the functional~$\E$ is not differentiable and therefore the regularity properties of its minimizers cannot be inferred from a Euler-Lagrange equation. Instead, we will deduce such properties directly from the minimizing inequality, as done in~\cite{GG82} in the case of a local functional.

We now specify the notion of minimizers that we take into account. To this aim, we will say that a function~$u: \R^n \to \R$ belongs to~$\W^{s, p}(\Omega)$ if~$u|_\Omega \in L^p(\Omega)$ and
$$
\iint_{\C_\Omega} \frac{|u(x) - u(y)|^p}{|x - y|^{n + sp}} \, dx dy < +\infty.
$$
By~\eqref{Kbounds} and~\eqref{Fbounds}, this is equivalent to ask that~$u|_\Omega \in L^p(\Omega)$ and~$\E(u; \Omega) < +\infty$.

\begin{definition}
A function~$u \in \W^{s, p}(\Omega)$ is a \emph{superminimizer} of~$\E$ in~$\Omega$ if
\begin{equation} \label{EuleEu+phi}
\E(u; \Omega) \le \E(u + \varphi; \Omega)
\end{equation}
for every non-negative measurable function~$\varphi: \R^n \to \R$ supported inside~$\Omega$. Similarly,~$u$ is a \emph{subminimizer} of~$\E$ in~$\Omega$ if~\eqref{EuleEu+phi} holds true for every non-positive such~$\varphi$. Finally,~$u$ is a \emph{minimizer} of~$\E$ in~$\Omega$ if~\eqref{EuleEu+phi} holds for every measurable~$\varphi: \R^n \to \R$ supported inside~$\Omega$.
\end{definition}

It is not hard to check that~$u$ is a minimizer if and only if it is at the same time a sub- and a superminimizer.

In the following result, we establish that minimizers of the energy functional~$\E$ belong to a fractional~De~Giorgi class.

\begin{theorem} \label{minDeG}
Let~$u \in L^{p - 1}_s(\R^n) \cap \W^{s, p}(\Omega)$. There exists a constant~$H \ge 1$, depending only on~$n$,~$s$,~$p$, and~$\Lambda$, such that:
\begin{enumerate}[label=$(\alph*)$,leftmargin=*]
\item if~$u$ is a superminimizer of~$\E$ in~$\Omega$, then~$u \in \DGm(\Omega; F_0^{1/p}, H, 0)$;
\item if~$u$ is a subminimizer of~$\E$ in~$\Omega$, then~$u \in \DGp(\Omega; F_0^{1/p}, H, 0)$;
\item if~$u$ is a minimizer of~$\E$ in~$\Omega$, then~$u \in \DG(\Omega; F_0^{1/p}, H, 0)$.
\end{enumerate}
\end{theorem}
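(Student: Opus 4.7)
By symmetry, items $(a)$ and $(b)$ are interchangeable: for a superminimizer one tests \eqref{EuleEu+phi} against the non-negative admissible variation $\varphi = \eta (u-k)_-$ in place of the non-positive $\varphi = -\eta (u-k)_+$ used below, and every step of the argument is unchanged after swapping the roles of $(u-k)_+$ and $(u-k)_-$. Item $(c)$ then follows from $(a)$ and $(b)$. I focus on $(b)$ and verify \eqref{DG+-def} for $(u-k)_+$ with $d^p = F_0$, $\lambda = 0$, and a constant $H$ depending only on $n,s,p,\Lambda$.

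Fix $x_0 \in \Omega$, $0 < r < R < \dist(x_0,\partial \Omega)$, $k \in \R$, and a cutoff $\eta \in C_c^\infty(B_{(R+r)/2}(x_0))$ with $\eta \equiv 1$ on $B_r(x_0)$, $0 \le \eta \le 1$, and $|\nabla \eta| \le C/(R-r)$. The function $\varphi := -\eta(u-k)_+$ is non-positive and supported in $A^+(k,x_0,(R+r)/2) \subset \Omega$; setting $w := u + \varphi$ and using \eqref{Fbounds} to bound the potential variation by $2 F_0 |A^+(k,x_0,R)|$, the subminimality condition \eqref{EuleEu+phi} collapses to the master estimate
\begin{equation*}
\frac{1}{2p} \iint_{\C_\Omega} \bigl(|u(x)-u(y)|^p - |w(x)-w(y)|^p\bigr) K(x,y)\,dx\,dy \,\le\, 2 F_0 \bigl|A^+(k,x_0,R)\bigr|,
\end{equation*}
the right-hand side of which already delivers the first summand on the RHS of \eqref{DG+-def}.

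The heart of the argument, and what I expect to be the main obstacle, is a pointwise algebraic inequality converting the integrand on the left into the two summands appearing on the LHS of \eqref{DG+-def}, up to errors absorbable into the two remaining summands on the right. Writing $\eta_1 = \eta(x)$, $\eta_2 = \eta(y)$, $a = u(x)-k$, $b = u(y)-k$, a case analysis on the signs of $a,b$ produces constants $c_p, C_p > 0$ such that
\begin{equation*}
\begin{aligned}
|a-b|^p - \bigl|(a - \eta_1 a_+) - (b - \eta_2 b_+)\bigr|^p \,\ge\,& c_p\,\bigl|\eta_1 a_+ - \eta_2 b_+\bigr|^p + c_p \bigl(\eta_1 a_+ b_-^{p-1} + \eta_2 b_+ a_-^{p-1}\bigr) \\
&- C_p\,|\eta_1 - \eta_2|^p \bigl(a_+^p + b_+^p\bigr) - C_p (\eta_1 + \eta_2)\bigl(a_+ b_+^{p-1} + b_+ a_+^{p-1}\bigr).
\end{aligned}
\end{equation*}
The mixed-sign case $a>0,\ b<0$ (and its mirror), where the genuinely nonlocal $\DGp$-term is born, follows at once from the one-variable identity
\begin{equation*}
(a + b_-)^p - ((1-\eta_1)a + b_-)^p \,=\, p \int_0^{\eta_1} a\bigl((1-t)a + b_-\bigr)^{p-1} dt \,\ge\, p\,\eta_1 a\, b_-^{p-1};
\end{equation*}
the case $a,b>0$ is handled by a standard Clarkson-type expansion, and the case $a,b\le 0$ is trivial.

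It remains to divide by $|x-y|^{n+sp}$---with $K$ comparable to $|x-y|^{-n-sp}$ by \eqref{Kbounds}---and integrate over $\C_\Omega$, which I split into $B_R(x_0)^2$ and the two crossed strips $B_R(x_0) \times (\R^n \setminus B_R(x_0))$ and its transpose. On $B_R(x_0)^2$, the first good term restricted to $B_r(x_0)^2$ (where $\eta_1 = \eta_2 = 1$) reproduces the Gagliardo seminorm $[(u-k)_+]_{W^{s,p}(B_r(x_0))}^p$, while the Lipschitz estimate $|\eta_1 - \eta_2| \le C|x-y|/(R-r)$ turns the $|\eta_1-\eta_2|^p$ error into the summand $R^{(1-s)p}(R-r)^{-p}\|(u-k)_+\|_{L^p(B_R)}^p$. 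The second good term, evaluated at $\eta_1 = 1$ for $x \in B_r(x_0)$ and integrated over $y \in \R^n$ (combining the $B_R(x_0)^2$ and crossed-strip contributions), delivers the nonlocal LHS summand of \eqref{DG+-def}. Finally, the cross-error $\eta_1 a_+ b_+^{p-1}$ restricted to the crossed strips, combined with the bound $|x-y|^{-n-sp} \le C R^{n+sp}(R-r)^{-n-sp}|y-x_0|^{-n-sp}$ valid for $x \in \supp\eta$ and $y \notin B_R(x_0)$, produces the final tail summand via \eqref{taildef}. Collecting all contributions yields \eqref{DG+-def} with $H = H(n,s,p,\Lambda)$.
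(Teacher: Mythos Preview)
Your overall architecture is right: test \eqref{EuleEu+phi} with $\varphi=-\eta(u-k)_+$, reduce to a pointwise comparison of $|u(x)-u(y)|^p$ and $|w(x)-w(y)|^p$, split $\C_\Omega$ into a diagonal block and crossed strips, and read off the four summands of~\eqref{DG+-def}. The difficulty is hidden in your pointwise inequality.

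\medskip
\noindent\textbf{The gap.} The error term $-C_p(\eta_1+\eta_2)\bigl(a_+b_+^{p-1}+b_+a_+^{p-1}\bigr)$ that you record does \emph{not} vanish on the diagonal block. Indeed, on $B_r(x_0)\times B_r(x_0)$ one has $\eta_1=\eta_2=1$, so this error contributes
\[
2C_p\int_{A^+(k,x_0,r)}\int_{A^+(k,x_0,r)}\frac{(u(x)-k)_+(u(y)-k)_+^{p-1}+(u(y)-k)_+(u(x)-k)_+^{p-1}}{|x-y|^{n+sp}}\,dx\,dy,
\]
and this quantity is not dominated by any of the summands on the right of~\eqref{DG+-def}. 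You only discuss the cross error on the strips $B_R(x_0)\times(\R^n\setminus B_R(x_0))$, where it does indeed produce the tail term; on $B_R(x_0)^2$ it is simply unaccounted for. Note also that an inequality of the form you propose \emph{without} this last error cannot hold with $c_p=1$ (try $p=2$, $a=2$, $b=1$, $\eta_1=1$, $\eta_2=\tfrac12$: the left side is $\tfrac34$ while $|\eta_1a-\eta_2b|^2-|\eta_1-\eta_2|^2(a^2+b^2)=1$), so at the very least the Clarkson step for $a,b>0$ requires a genuine argument that you have not supplied.

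\medskip
\noindent\textbf{How the paper handles this.} The paper's proof (written for $p=2$) does not attempt a single clean pointwise inequality with a uniform positive constant. Instead it introduces \emph{two} intermediate radii $r\le\rho<\tau\le R$ and, on $A^-(k)\times A^-(k)$, uses
\[
\Xi(x,y)\ge\bigl(1-2(1-\eta(x))^2\bigr)\,|w_-(x)-w_-(y)|^2-2|\eta(x)-\eta(y)|^2 w_-(y)^2,
\]
whose leading coefficient is allowed to be \emph{negative} off $B_\rho$. This produces an additional error $\iint_{B_\tau^2\setminus B_\rho^2}|w_-(x)-w_-(y)|^2|x-y|^{-n-2s}\,dx\,dy$ on the right-hand side. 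That error is then absorbed by Widman's hole-filling (adding it to both sides and dividing, yielding a factor $\gamma<1$ in front of $[w_-]^2_{H^s(B_\tau)}$) followed by an iteration lemma over $\rho,\tau\in[r,R]$. This absorption step is precisely what your single-scale argument with only $r,R$ is missing; either reinstate the intermediate radii and the hole-filling/iteration, or prove a genuinely sharper pointwise bound in the same-sign case that avoids the mixed $a_+b_+^{p-1}$ error altogether on $B_R(x_0)^2$.
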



For the sake of simplicity, we present the proof of Theorem~\ref{minDeG} only for~$p = 2$. With little additional technical effort, the argument can be easily extended to the case of a general~$p > 1$, as shown in the proof of~\cite[Proposition~7.5]{C17}.

\begin{proof}[Proof of Theorem~\ref{minDeG} for~$p = 2$]
We only deal with point~$(a)$, since~$(b)$ is completely analogous. Clearly,~$(c)$ immediately follows from~$(a)$ and~$(b)$.


Let~$x_0 \in \Omega$ and~$0 < r \le \rho < \tau \le R < \dist(x_0, \partial \Omega)$. Up to a translation, we may suppose that~$x_0 = 0$. Let~$\eta \in C^\infty(\R^n)$ be a cutoff function satisfying~$0 \le \eta \le 1$ in~$\R^n$,~$\supp(\eta) \subseteq B_{(\tau + \rho) / 2}$,~$\eta = 1$ in~$B_\rho$, and~$|\nabla \eta| \le 4 / (\tau - \rho)$ in~$\R^n$.

For any fixed~$k \in \R$, let~$w_\pm := (u - k)_\pm$,~$\varphi := \eta w_-$, and choose~$v := u + \varphi$ as a competitor for~$u$ in~\eqref{EuleEu+phi}. It holds
\begin{equation} \label{minineq}
\iint_{\C_{B_\tau}} \Xi(x, y) \, d\mu(x, y) \le 4 \int_{B_\tau} \Big\{ F(x, v(x)) - F(x, u(x)) \Big\} \, dx,
\end{equation}
where~$\Xi(x, y) := \left| u(x) - u(y) \right|^2 - \left| v(x) - v(y) \right|^2$ and~$d\mu(x, y) := K(x, y) \, dx dy$.

Now, on the one hand, by~\eqref{Fbounds} we have that
$$
F(x, v(x)) - F(x, u(x)) = F(x, u(x) + \eta(x) w_-(x)) - F(x, u(x)) \le 2 F_0 \, \chi_{A^-(k, \rho)}(x)
$$
for every~$x \in B_\rho$. By this, we easily obtain an upper bound for the right-hand side of~\eqref{minineq}:
\begin{equation} \label{RHSest}
\int_{B_\tau} \Big\{ F(x, v(x)) - F(x, u(x)) \Big\} \, dx \le 2 F_0 \left| A^-(k, \tau) \right|.
\end{equation}

On the other hand, we estimate the left-hand side of~\eqref{minineq} as follows. Using the definition of~$w_-$ along with Young's inequality, we get that, for every~$(x, y) \in A^-(k) \times A^-(k)$,
\begin{align*}
\Xi(x, y)
& = \left| w_-(x) - w_-(y) \right|^2 - \left| (1 - \eta(x)) \left( w_-(x) - w_-(y) \right) - \left( \eta(x) - \eta(y) \right) w_-(y) \right|^2 \\
& \ge \left( 1 - 2 (1 - \eta(x))^2 \right) \left| w_-(x) - w_-(y) \right|^2 - 2 \left| \eta(x) - \eta(y) \right|^2 w_-(y)^2.
\end{align*}
In particular, when~$x \in A^-(k) \setminus B_\tau$ and~$y \in A^-(k)$, it also holds
\begin{align*}
\Xi(x, y) & = \left| w_-(x) - w_-(y) \right|^2 - \left| \left( w_-(x) - w_-(y) \right) + \eta(y) w_-(y) \right|^2 \\
& \ge - 2 \eta(y) w_-(x) w_-(y).
\end{align*}
For~$(x, y) \in A^-(k) \times \left( \R^n \setminus A^-(k) \right)$ we have
\begin{align*}
\Xi(x, y)
& = \eta(x) w_-(x) \big( w_-(x) + 2 w_+(y) + (1 - \eta(x)) w_-(x) \big) \\
& \ge \eta(x) \left( \vphantom{\big(} \left| w_-(x) - w_-(y) \right|^2 + 2 w_-(x) w_+(y) \right).
\end{align*}
By these inequalities, the fact that~$\Xi(x, y) = 0$ for~$x, y \in \R^n \setminus A^-(k)$, hypotheses~\eqref{Ksymm}-\eqref{Kbounds} on the kernel~$K$, and the properties of the cutoff~$\eta$, it is not hard to conclude that
\begin{align*}
\iint_{\C_{B_\tau}} \Xi(x, y) \, d\mu(x, y) & \ge \frac{1}{C} \left\{ [w_-]_{H^s(B_\rho)}^2 + \int_{B_\rho} w_-(x) \left\{ \int_{\R^n} \frac{w_+(y)}{|x - y|^{n + 2 s}} \, dy \right\} dx \right\} \\
& \quad - C \, \Bigg\{ \iint_{B_\tau^2 \setminus B_\rho^2} \frac{\left| w_-(x) - w_-(y) \right|^2}{|x - y|^{n + 2 s}} \, dx dy \\
& \quad + \frac{R^{2 - 2 s}}{(\tau - \rho)^2} \| w_- \|_{L^2(B_R)}^2 + \frac{R^{n + 2 s}}{(\tau - \rho)^{n + 2 s}} \| w_- \|_{L^1(B_R)} \overline{\Tail}_{s, 2}(w_-; r) \Bigg\}
\end{align*}
for some constant~$C \ge 1$ depending only on~$n$,~$s$,~$p$, and~$\Lambda$.

By putting together the last estimate,~\eqref{RHSest}, and~\eqref{minineq}, and applying Widman's hole-filling technique (with respect to the term~$[w_-]_{H^s(B_\rho)}^2$), we obtain
\begin{align*}
& [w_-]_{H^s(B_\rho)}^2 + \int_{B_\rho} w_-(x) \left\{ \int_{\R^n} \frac{w_+(y)}{|x - y|^{n + 2 s}} \, dy \right\} dx \le \gamma \, \Bigg\{ [w_-]_{H^s(B_\tau)}^2 \\
& \hspace{20pt} + F_0 \left| A^-(k, R) \right| + \frac{R^{2 - 2 s}}{(\tau - \rho)^2} \| w_- \|_{L^2(B_R)}^2 + \frac{R^{n + 2 s}}{(\tau - \rho)^{n + 2 s}} \| w_- \|_{L^1(B_R)} \overline{\Tail}_{s, 2}(w_-; r) \Bigg\}
\end{align*}
for some constant~$\gamma \in (0, 1)$ depending only on~$n$,~$s$,~$p$, and~$\Lambda$. From this and a simple iteration lemma (see, e.g.,~\cite[Lemma~4.11]{C17}) it follows that~$u \in \DG_-(\Omega; F_0^{1/p}, H, 0)$ for some~$H \ge 1$ depending only on~$n$,~$s$,~$p$, and~$\Lambda$.
\end{proof}

By combining this result with Theorems~\ref{ulocboundthm} and~\ref{ulocHolderthm} of Section~\ref{DeGsec}, we deduce the H\"older continuity of minimizers of~$\E$.

\begin{corollary}[\bf H\"older continuity of minimizers] \label{minHoldcor}
Let~$u \in L^{p - 1}_s(\R^n) \cap \W^{s, p}(\Omega)$ be a minimizer of~$\E$ in~$\Omega$. Then,~$u \in C^\alpha_\loc(\Omega)$ for some exponent~$\alpha \in (0, 1)$ and there exists a constant~$C \ge 1$ such that
$$
\| u \|_{L^\infty(B_R(x_0))} + R^\alpha [ u ]_{C^\alpha(B_R(x_0))} \le C \left( \frac{\| u \|_{L^p(B_{2 R}(x_0))}}{R^{n/p}} + \Tail_{s, p}(u; x_0, R) + R^s F_0^{1/p} \right)
$$
for every point~$x_0 \in \Omega$ and radius~$0 < R < \dist (x_0, \partial \Omega) / 2$. The constants~$\alpha$ and~$C$ depend only on~$n$,~$s$,~$p$, and~$\Lambda$.
\end{corollary}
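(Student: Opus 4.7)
The plan is to combine Theorem~\ref{minDeG}(c), which embeds minimizers of~$\E$ into a strong fractional De Giorgi class, with the two general regularity results on such classes proved in Section~\ref{DeGsec}, namely Theorems~\ref{ulocboundthm} (local boundedness) and~\ref{ulocHolderthm} (H\"older continuity). Since~$u$ is a minimizer of~$\E$, Theorem~\ref{minDeG}(c) yields~$u \in \DG(\Omega; F_0^{1/p}, H, 0)$ for some~$H \geq 1$ depending only on~$n$,~$s$,~$p$, and~$\Lambda$, and in particular~$u \in \wDG(\Omega; F_0^{1/p}, H, 0)$. With this choice, $\lambda = 0$ and therefore $(\lambda + sp)/p = s$, which is exactly the exponent appearing in the statement.

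For the~$L^\infty$ contribution I would apply Theorem~\ref{ulocboundthm} directly on~$B_R(x_0)$, obtaining
$$
\|u\|_{L^\infty(B_R(x_0))} \leq C \left\{ \left( \dashint_{B_{2R}(x_0)} |u(x)|^p \, dx \right)^{1/p} + \Tail_{s,p}(u; x_0, R) + R^s F_0^{1/p} \right\}.
$$
Rewriting the averaged~$L^p$ norm as~$|B_1|^{-1/p} R^{-n/p}\|u\|_{L^p(B_{2R}(x_0))}$ and absorbing the dimensional constant into~$C$ produces the first piece of the desired inequality.

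For the H\"older contribution I would apply Theorem~\ref{ulocHolderthm} at the smaller radius~$R/2$, which is legitimate because~$R/2 < R < \dist(x_0, \partial\Omega)/2$, to get
$$
[u]_{C^\alpha(B_{R/2}(x_0))} \leq \frac{C}{R^\alpha} \left( \|u\|_{L^\infty(B_R(x_0))} + \Tail_{s,p}(u; x_0, R) + R^s F_0^{1/p} \right),
$$
and then feed the bound from the previous step into the~$\|u\|_{L^\infty(B_R(x_0))}$ appearing on the right. To upgrade the seminorm from~$B_{R/2}(x_0)$ to~$B_R(x_0)$, I would cover~$B_R(x_0)$ by finitely many balls of radius~$R/4$ centered at points~$y_i$ with~$\dist(y_i,\partial\Omega)$ still comparable to~$\dist(x_0,\partial\Omega)$, apply the previous two estimates at each~$y_i$ (observing that~$\Tail_{s,p}(u; y_i, R/4) \lesssim \|u\|_{L^\infty(B_R(x_0))} + \Tail_{s,p}(u; x_0, R)$ by splitting the integral defining the tail into the annulus~$B_R(x_0)\setminus B_{R/4}(y_i)$ and its complement), and finally handle pairs~$(x,y)\in B_R(x_0)\times B_R(x_0)$ with~$|x-y|>R/4$ via the trivial bound~$|u(x)-u(y)|\leq 2\|u\|_{L^\infty(B_R(x_0))}\leq 2(4/R)^\alpha\|u\|_{L^\infty(B_R(x_0))}|x-y|^\alpha$.

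The whole argument is just an assembly of previously established results, so there is no substantive mathematical obstacle; the only care required is the radius bookkeeping in the covering step and the elementary monotonicity estimate that controls tails centered at nearby points by the tail at~$x_0$ plus the local~$L^\infty$ norm. The final constants~$\alpha$ and~$C$ depend only on~$n$,~$s$,~$p$, and~$\Lambda$, since the same is true for~$H$ and since~$\lambda$ has been fixed to~$0$.
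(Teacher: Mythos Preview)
Your proposal is correct and follows exactly the approach the paper indicates: the paper simply states that the corollary follows ``by combining this result [Theorem~\ref{minDeG}] with Theorems~\ref{ulocboundthm} and~\ref{ulocHolderthm}'' and gives no further details. Your additional care with the radius bookkeeping and the covering argument to pass from~$B_{R/2}(x_0)$ to~$B_R(x_0)$ is standard and fills in precisely the routine details the paper omits.
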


Similarly, by Theorems~\ref{DGharthm},~\ref{DGweakHarthm}, and~\ref{minDeG}, non-negative minimizers of~$\E$ satisfies the following Harnack-type inequalities.

\begin{corollary}[\bf Harnack inequalities for minimizers] Let~$u \in L^{p - 1}_s(\R^n) \cap \W^{s, p}(\Omega)$ with~$u \ge 0$ in~$\Omega$. The following statements hold true:
\begin{enumerate}[label=$(\alph*)$,leftmargin=*]
\item if~$u$ is a superminimizer of~$\E$ in~$\Omega$, then there exist an exponent~$\varepsilon > 0$ and a constant~$C \ge 1$, both depending only on~$n$,~$s$,~$p$, and~$\Lambda$, such that
$$
\left( \dashint_{B_R(x_0)} u(x)^\varepsilon \, dx \right)^{\frac{1}{\varepsilon}} \le C \left( \inf_{B_R(x_0)} u + \Tail_{s, p}(u_-; x_0, R) + R^s F_0^{1/p} \right)
$$
for every~$x_0 \in \Omega$ and~$0 < R < \dist (x_0, \partial \Omega) / 2$;
\item if~$u$ is a minimizer of~$\E$ in~$\Omega$, then there exists a constant~$C \ge 1$, only depending on~$n$,~$s$,~$p$, and~$\Lambda$, such that
$$
\sup_{B_R(x_0)} u + \Tail_{s, p}(u_+; x_0, R) \le C \left( \inf_{B_R(x_0)} u + \Tail_{s, p}(u_-; x_0, R) + R^s F_0^{1/p} \right)
$$
for every~$x_0 \in \Omega$ and~$0 < R < \dist (x_0, \partial \Omega) / 2$.
\end{enumerate}
\end{corollary}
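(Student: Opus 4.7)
The corollary is a direct synthesis of the previous two results: Theorem~\ref{minDeG} places (sub/super)minimizers of~$\E$ inside fractional De Giorgi classes with explicit parameters, and Theorems~\ref{DGweakHarthm} and~\ref{DGharthm} convert membership in these classes into Harnack-type inequalities. The plan is simply to track the parameters through these implications. The crucial observation is that Theorem~\ref{minDeG} yields $d = F_0^{1/p}$ and $\lambda = 0$, so the extra term~$R^{(\lambda + sp)/p} d$ appearing in~\eqref{DGweakHar} and~\eqref{DGhar} becomes exactly~$R^{s} F_0^{1/p}$, matching the right-hand side of the statement.

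For part~$(a)$, I would start from the hypothesis that~$u \in L^{p-1}_s(\R^n) \cap \W^{s, p}(\Omega)$ is a superminimizer of~$\E$ in~$\Omega$ with~$u \ge 0$ in~$\Omega$. Theorem~\ref{minDeG}~$(a)$ then gives~$u \in \DGm(\Omega; F_0^{1/p}, H, 0)$, for a constant~$H \ge 1$ depending only on~$n$,~$s$,~$p$, and~$\Lambda$. Since moreover~$u \ge 0$ in~$\Omega$, the weak Harnack inequality of Theorem~\ref{DGweakHarthm}, applied with~$d = F_0^{1/p}$ and~$\lambda = 0$, immediately yields the desired estimate, with~$\varepsilon$ and~$C$ depending only on~$n$,~$s$,~$p$, and~$\Lambda$ (the dependence on~$H$ being itself controlled by these parameters).

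For part~$(b)$, if~$u$ is a minimizer, Theorem~\ref{minDeG}~$(c)$ places~$u$ in the smaller class~$\DG(\Omega; F_0^{1/p}, H, 0)$. Combined with~$u \ge 0$ in~$\Omega$, this allows to invoke Theorem~\ref{DGharthm} (again with~$d = F_0^{1/p}$ and~$\lambda = 0$). The resulting inequality~\eqref{DGhar} is actually slightly stronger than the claim, since it also controls~$\Tail_{s,p}(u_+; x_0, R)$ from above; discarding this nonnegative term on the left gives exactly the form stated.

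There is no serious obstacle: the whole argument amounts to verifying that the parameters of Theorem~\ref{minDeG} feed correctly into Theorems~\ref{DGweakHarthm} and~\ref{DGharthm}, and that the sign condition~$u \ge 0$ in~$\Omega$—which is a hypothesis of the corollary—supplies the remaining input required by those theorems. The only point that deserves a brief sentence is the observation that Theorem~\ref{minDeG} is applicable, i.e.\ that~$u \in L^{p-1}_s(\R^n) \cap \W^{s, p}(\Omega)$ is precisely the functional setting in which superminimizers (respectively, minimizers) are defined, so no further integrability hypothesis is needed.
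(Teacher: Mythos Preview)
Your proposal is correct and follows exactly the route the paper takes: the corollary is stated with no proof beyond the single sentence ``Similarly, by Theorems~\ref{DGharthm},~\ref{DGweakHarthm}, and~\ref{minDeG}, non-negative minimizers of~$\E$ satisfy the following Harnack-type inequalities.'' Your parameter tracking ($d = F_0^{1/p}$, $\lambda = 0$, hence $R^{(\lambda+sp)/p}d = R^s F_0^{1/p}$) is exactly what is needed.

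One small misreading in part~$(b)$: the tail term $\Tail_{s,p}(u_+; x_0, R)$ \emph{is} present on the left-hand side of the corollary's inequality, so there is nothing to discard---\eqref{DGhar} gives the stated inequality verbatim, not a stronger one.
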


\section{Applications to solutions of nonlocal equations} \label{solsec}

Another application of fractional De Giorgi classes is represented by the regularity results that will be discussed in this section, concerning weak solutions of equations driven by nonlocal operators.

Let~$K$ be a kernel satisfying~\eqref{Ksymm} and~\eqref{Kbounds}, for some~$s \in (0, 1)$,~$p > 1$, and~$\Lambda \ge 1$. We introduce the nonlinear and nonlocal operator~$\L = \L_{K, p}$ as formally defined on a measurable function~$u: \R^n \to \R$ and at a point~$x \in \R^n$ by
\begin{align*}
\L u(x) := & \,\, \PV \int_{\R^n} |u(x) - u(y)|^{p - 2} (u(x) - u(y)) K(x, y) \, dy \\
= & \,\, \lim_{\delta \searrow 0} \int_{\R^n \setminus B_\delta(x)} |u(x) - u(y)|^{p - 2} (u(x) - u(y)) K(x, y) \, dy.
\end{align*}
Also, let~$f \in L^\infty(\Omega)$ and~$f_0 \ge \| f \|_{L^\infty(\Omega)}$ be given.

Throughout the section, we will consider (sub-/super-)solutions of the equation
\begin{equation} \label{eq}
\L u = f \quad \mbox{in } \Omega,
\end{equation}
defined in the following weak sense.

\begin{definition}
A function~$u \in \W^{s, p}(\Omega)$ is a \emph{weak supersolution} of~\eqref{eq} if
$$
\frac{1}{2} \int_{\R^n} \int_{\R^n} |u(x) - u(y)|^{p - 2} (u(x) - u(y)) (\varphi(x) - \varphi(y)) K(x, y) \, dx dy \le \int_{\R^n} f(x) \varphi(x) \, dx
$$
for every non-negative function~$\varphi \in W^{s, p}(\R^n)$ supported inside~$\Omega$. Conversely,~$u$ is a \emph{weak subsolution} of~\eqref{eq} if the reverse inequality holds for every such~$\varphi$. Finally,~$u$ is a \emph{weak solution} of~\eqref{eq} if
$$
\frac{1}{2} \int_{\R^n} \int_{\R^n} |u(x) - u(y)|^{p - 2} (u(x) - u(y)) (\varphi(x) - \varphi(y)) K(x, y) \, dx dy = \int_{\R^n} f(x) \varphi(x) \, dx
$$
for every~$\varphi \in W^{s, p}(\R^n)$ supported inside~$\Omega$.
\end{definition}

Note that, if~$F = F(x, u)$ is a differentiable function in the variable~$u$, the minimizers of the energy~$\E$ considered in Section~\ref{minsec} are weak solutions of~\eqref{eq}, with~$f = - F_u(\cdot, u)$. As for those minimizers, solutions of equations driven by the operator~$\L$ are contained in a fractional~De~Giorgi class. This is the content of the next result.

\begin{theorem} \label{solDeG}
Let~$u \in L^{p - 1}_s(\R^n) \cap \W^{s, p}(\Omega)$. There exists a constant~$H \ge 1$, depending only on~$n$,~$s$,~$p$, and~$\Lambda$, such that:
\begin{enumerate}[label=$(\alph*)$,leftmargin=*]
\item if~$u$ is a weak supersolution of~\eqref{eq}, then~$u \in \DGm(\Omega; f_0^{1/(p - 1)}, H, s p / (p - 1))$;
\item if~$u$ is a weak subsolution of~\eqref{eq}, then~$u \in \DGp(\Omega; f_0^{1/(p - 1)}, H, s p / (p - 1))$;
\item if~$u$ is a weak solution of~\eqref{eq}, then~$u \in \DG(\Omega; f_0^{1/(p - 1)}, H, s p / (p - 1))$;
\end{enumerate}
\end{theorem}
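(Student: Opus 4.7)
My plan is to mirror the structure of the proof of Theorem~\ref{minDeG}, but with the minimizing inequality~\eqref{EuleEu+phi} replaced by the weak formulation of the equation, and with the forcing term~$\int f \varphi$ taking the place of the potential difference~$\int \{F(x,v) - F(x,u)\}$. As in that proof, by symmetry and by the fact that~$(c)$ is the intersection~$(a) \cap (b)$, it suffices to establish~$(a)$. So, fix~$x_0 \in \Omega$ (WLOG~$x_0 = 0$), radii~$0 < r \le \rho < \tau \le R < \dist(x_0, \partial \Omega)$, a level~$k \in \R$, and a smooth cutoff~$\eta$ with~$0 \le \eta \le 1$, $\supp(\eta) \subseteq B_{(\tau + \rho)/2}$, $\eta \equiv 1$ in~$B_\rho$, and~$|\nabla \eta| \le 4/(\tau - \rho)$. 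Set~$w_\pm := (u - k)_\pm$ and take~$\varphi := \eta^p \, w_-$, which is a legitimate non-negative test function supported inside~$\Omega$.

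Plugging~$\varphi$ into the weak supersolution inequality yields
$$
\frac{1}{2} \iint_{\R^n \times \R^n} |u(x) - u(y)|^{p - 2} (u(x) - u(y)) (\varphi(x) - \varphi(y)) K(x, y) \, dx dy \le \int_{\Omega} f \varphi \, dx.
$$
The next step, which is the technical heart of the argument, is to decompose the double integral on the left according to whether the points~$x$ and~$y$ belong to~$A^-(k)$ or not, and apply the nonlinear algebraic inequalities of DiCastro-Kuusi-Palatucci type (see, e.g.,~\cite{DKP14,DKP16}). This produces three favorable contributions: from~$A^-(k) \times A^-(k)$ one recovers a lower bound of order~$[w_-]_{W^{s,p}(B_\rho)}^p$, modulo the usual error term in~$|\eta(x) - \eta(y)|^p w_-^p$; from~$A^-(k) \times A^+(k)$ (symmetrized) one extracts the crucial nonlocal cross term
$$
\int_{B_\rho} w_-(x) \left\{ \int_{\R^n} \frac{w_+(y)^{p-1}}{|x - y|^{n + sp}} \, dy \right\} dx,
$$
which is precisely the extra summand on the left-hand side of~\eqref{DG+-def}; the unfavorable contributions (those coming from pairs with~$x \in B_\tau \setminus B_\rho$, or involving the tail~$\R^n \setminus B_R$) are absorbed using~\eqref{Ksymm}--\eqref{Kbounds}, the properties of~$\eta$, and elementary manipulations mimicking those already carried out in the proof of Theorem~\ref{minDeG}.

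For the right-hand side, the forcing term is bounded using H\"older's inequality applied in two stages: first,
$$
\int_\Omega f \varphi \, dx \le f_0 \int_{A^-(k, R)} w_- \, dx \le f_0 \, |A^-(k, R)|^{(p-1)/p} \, \| w_- \|_{L^p(B_R)},
$$
and then Young's inequality with exponents~$p/(p-1)$ and~$p$ applied to the splitting $f_0 \, R^{s} \, |A^-|^{(p-1)/p}$ times $R^{-s} \| w_- \|_{L^p(B_R)}$ yields
$$
\int_\Omega f \varphi \, dx \le C R^{sp/(p-1)} f_0^{p/(p-1)} |A^-(k, R)| + \varepsilon R^{-sp} \| w_- \|_{L^p(B_R)}^p,
$$
which accounts exactly for the data term~$R^\lambda d^p |A^-(k,R)|$ with~$\lambda = sp/(p-1)$ and~$d = f_0^{1/(p-1)}$, the remainder being absorbed into the standard term~$R^{(1-s)p}/(R-r)^p \|w_-\|_{L^p(B_R)}^p$. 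Combining the lower bound of the bilinear form with this upper bound of the forcing, applying Widman's hole-filling trick with respect to~$[w_-]_{W^{s,p}(B_\rho)}^p$ to reabsorb the~$[w_-]_{W^{s,p}(B_\tau)}^p$ error, and iterating by means of a standard lemma (e.g.,~\cite[Lemma~4.11]{C17}), one reaches~\eqref{DG+-def} with the claimed constants.

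The main obstacle is the algebraic analysis of the kernel term: the identification of the cross-term~$\int w_- \int w_+^{p-1}/|x-y|^{n+sp}$ requires careful sign and monotonicity considerations in each of the four regions of the product space, and the choice~$\varphi = \eta^p w_-$ (rather than~$\eta w_-$) is essential to obtain a lower bound that is genuinely quadratic (or~$p$-homogeneous) in~$w_-$. For~$p = 2$ the computation reduces to the one already performed in the proof of Theorem~\ref{minDeG}; for general~$p > 1$, one must invoke the DKP-type inequalities, as was done in the proof of~\cite[Proposition~8.5]{C17}, to which we refer for further technical details.
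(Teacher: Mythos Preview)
Your proposal is correct and matches the strategy indicated by the paper, which in fact omits the proof entirely and simply states that it ``can be proved through a strategy analogous to Theorem~\ref{minDeG}'' with a reference to~\cite[Section~8]{C17}. Your choice of test function~$\varphi = \eta^p w_-$, the DKP-type algebraic decomposition, the H\"older--Young estimate on~$\int f\varphi$ producing the term~$R^{sp/(p-1)} f_0^{p/(p-1)} |A^-(k,R)|$ (with the residual~$R^{-sp}\|w_-\|_{L^p}^p$ absorbed via~$R^{-sp} \le R^{(1-s)p}/(R-r)^p$), and the hole-filling iteration are all exactly the expected ingredients.
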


Theorem~\ref{solDeG} can be proved through a strategy analogous to Theorem~\ref{minDeG}. We omit the details for brevity and refer the interested reader to~\cite[Section~8]{C17}.

By putting together this result with Theorems~\ref{ulocboundthm} and~\ref{ulocHolderthm}, we are able to deduce the H\"older regularity of weak solutions to~\eqref{eq}.

\begin{corollary}[\bf H\"older continuity of solutions] \label{solHoldcor}
Let~$u \in L^{p - 1}_s(\R^n) \cap \W^{s, p}(\Omega)$ be a weak solution of~\eqref{eq}. Then,~$u \in C^\alpha_\loc(\Omega)$ for some exponent~$\alpha \in (0, 1)$ and there exists a constant~$C \ge 1$ such that
$$
\| u \|_{L^\infty(B_R(x_0))} + R^\alpha [ u ]_{C^\alpha(B_R(x_0))} \le C \left( \frac{\| u \|_{L^p(B_{2 R}(x_0))}}{R^{n / p}} + \Tail_{s, p}(u; x_0, R) + R^{\frac{sp}{p - 1}} f_0^{1/(p - 1)} \right)
$$
for every~$x_0 \in \Omega$ and every~$0 < R < \dist (x_0, \partial \Omega) / 2$. The constants~$\alpha$ and~$C$ only depend on~$n$,~$s$,~$p$, and~$\Lambda$.
\end{corollary}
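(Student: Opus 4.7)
The plan is to chain together the three main results that have already been established: Theorem~\ref{solDeG} places any weak solution of $\L u = f$ inside the fractional De Giorgi class with the correct parameters, and then Theorems~\ref{ulocboundthm} and~\ref{ulocHolderthm} supply the $L^\infty$ and $C^\alpha$ bounds. No new analytic input is required; the work is purely in matching the exponents and radii.

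First, I would invoke Theorem~\ref{solDeG}(c) to obtain $u \in \DG(\Omega; f_0^{1/(p-1)}, H, sp/(p-1))$ for a constant $H$ depending only on $n,s,p,\Lambda$. Setting $d := f_0^{1/(p-1)}$ and $\lambda := sp/(p-1)$, a direct calculation gives $(\lambda + sp)/p = sp/(p-1)$, so the scale-invariant factor $R^{(\lambda+sp)/p} d$ that appears throughout Section~\ref{DeGsec} becomes precisely the term $R^{sp/(p-1)} f_0^{1/(p-1)}$ sitting on the right-hand side of the corollary. This identification is the bridge between the abstract De Giorgi framework and the form in which Corollary~\ref{solHoldcor} is stated.

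Next, since $\DG \subset \wDG$, I would apply Theorem~\ref{ulocboundthm} at the point $x_0$ and radius $R$ to obtain
\[
\|u\|_{L^\infty(B_R(x_0))} \le C\left(R^{-n/p}\|u\|_{L^p(B_{2R}(x_0))} + \Tail_{s,p}(u; x_0, R) + R^{sp/(p-1)} f_0^{1/(p-1)}\right),
\]
which already handles the first summand in the corollary's estimate. For the Hölder seminorm I would apply Theorem~\ref{ulocHolderthm} at a slightly smaller radius (say $R/4$) and cover $B_R(x_0)$ by finitely many such sub-balls, connecting two arbitrary points of $B_R(x_0)$ by a short chain so that the local Hölder bounds assemble into a global one with a comparable constant. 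Theorem~\ref{ulocHolderthm} produces a bound in terms of $\|u\|_{L^\infty(B_{2\rho}(z))}$ and $\Tail_{s,p}(u; z, 2\rho)$ for the sub-ball centers $z$, so I would then invoke the $L^\infty$ bound from the previous step on a slightly enlarged ball to replace those $L^\infty$ factors with the $L^p$ average on $B_{2R}(x_0)$.

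The last piece of bookkeeping is to pass from tails centered at various points $z\in B_R(x_0)$ and radii $\sim R$ to a single $\Tail_{s,p}(u; x_0, R)$: a direct comparison of the defining integrals, plus the elementary monotonicity $\Tail_{s,p}(u; x_0, 2R) \le 2^{sp/(p-1)} \Tail_{s,p}(u; x_0, R)$ and a splitting of the annulus $B_{2R}\setminus B_R$ that is absorbed into the $L^p$ norm via Hölder's inequality, suffices to close up. The main (and only) obstacle is this bookkeeping step, because one has to be careful that all radii stay within $\dist(\cdot,\partial\Omega)$ and that constants remain dependent only on $n,s,p,\Lambda$; there is no genuine analytic difficulty, and the final inequality of Corollary~\ref{solHoldcor} then follows by rearranging the combined estimate.
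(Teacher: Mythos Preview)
Your proposal is correct and matches the paper's approach exactly: the paper provides no proof beyond the sentence ``By putting together this result with Theorems~\ref{ulocboundthm} and~\ref{ulocHolderthm}, we are able to deduce the H\"older regularity of weak solutions,'' and you have simply spelled out the bookkeeping (the identification $R^{(\lambda+sp)/p}d = R^{sp/(p-1)}f_0^{1/(p-1)}$, the covering of $B_R(x_0)$ by sub-balls, and the comparison of tails at nearby centers and radii) that this sentence implicitly contains.
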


When~$p = 2$, the~$C^\alpha$ character of solutions to~\eqref{eq} is well-known---see, e.g., Silvestre~\cite{Sil06} and Kassmann~\cite{Kas09}. For a general~$p > 1$, such regularity has been obtained by~Di~Castro, Kuusi \& Palatucci~\cite{DKP16} in the case of~$\L$-harmonic functions, i.e., when~$f \equiv 0$. To the best of our knowledge, Corollary~\ref{solHoldcor}---appeared in~\cite{C17} as Theorem~2.4---is the first result establishing H\"older estimates for solutions of~\eqref{eq} when~$p \ne 2$ and in the presence of a non-vanishing right-hand side~$f$. See also the very recent~\cite{BLS17} for almost sharp results when~$p \ge 2$ and~$s < (p - 1)/p$.

Taking advantage of Theorems~\ref{DGharthm} and~\ref{DGweakHarthm}, we also have the following Harnack inequalities.

\begin{corollary}[\bf Harnack inequalities for solutions]
Let~$u \in L^{p - 1}_s(\R^n) \cap \W^{s, p}(\Omega)$ with~$u \ge 0$ in~$\Omega$. The following statements hold true:
\begin{enumerate}[label=$(\alph*)$,leftmargin=*]
\item if~$u$ is a weak supersolution of~\eqref{eq}, then there exist an exponent~$\varepsilon > 0$ and a constant~$C \ge 1$, both depending only on~$n$,~$s$,~$p$, and~$\Lambda$, such that
$$
\left( \dashint_{B_R(x_0)} u(x)^\varepsilon \, dx \right)^{\frac{1}{\varepsilon}} \le C \left( \inf_{B_R(x_0)} u + \Tail_{s, p}(u_-; x_0, R) + R^{\frac{s p}{p - 1}} f_0^{1/(p - 1)} \right)
$$
for every~$x_0 \in \Omega$ and~$0 < R < \dist (x_0, \partial \Omega) / 2$;
\item if~$u$ is a weak solution of~\eqref{eq}, then there exists a constant~$C \ge 1$, only depending on~$n$,~$s$,~$p$, and~$\Lambda$, such that
$$
\sup_{B_R(x_0)} u + \Tail_{s, p}(u_+; x_0, R) \le C \left( \inf_{B_R(x_0)} u + \Tail_{s, p}(u_-; x_0, R) + R^{\frac{s p}{p - 1}} f_0^{1/(p - 1)} \right)
$$
for every~$x_0 \in \Omega$ and~$0 < R < \dist (x_0, \partial \Omega) / 2$.
\end{enumerate}
\end{corollary}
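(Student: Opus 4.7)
The plan is to derive both Harnack-type estimates for~\eqref{eq} as a direct corollary of Theorem~\ref{solDeG} combined with the abstract Harnack inequalities for the class~$\DG$ already established in Section~\ref{DeGsec}. No new regularity ingredient is required; the whole proof is a matter of plugging parameters.

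The first step is to apply Theorem~\ref{solDeG} to locate~$u$ inside the appropriate fractional De Giorgi class. Under the hypotheses of part~$(a)$, the weak supersolution~$u$ belongs to~$\DGm(\Omega; f_0^{1/(p-1)}, H, sp/(p-1))$ for some constant~$H \ge 1$ depending only on~$n$,~$s$,~$p$, and~$\Lambda$. Under those of part~$(b)$, the weak solution~$u$ belongs to~$\DG(\Omega; f_0^{1/(p-1)}, H, sp/(p-1))$ for a similar~$H$. In both cases the non-negativity hypothesis~$u \ge 0$ in~$\Omega$ (not in all of~$\R^n$) is carried over unchanged, which is precisely what Theorems~\ref{DGweakHarthm} and~\ref{DGharthm} require.

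The second step is to invoke Theorem~\ref{DGweakHarthm} for part~$(a)$ and Theorem~\ref{DGharthm} for part~$(b)$, with the parameter choices~$d = f_0^{1/(p-1)}$ and~$\lambda = sp/(p-1)$. The only arithmetic verification is that the additive term~$R^{(\lambda+sp)/p} d$ appearing in those abstract statements simplifies to the term written in the present corollary; indeed,
$$
\frac{\lambda + sp}{p} = \frac{1}{p}\left( \frac{sp}{p-1} + sp \right) = \frac{sp}{p-1},
$$
so~$R^{(\lambda+sp)/p} d = R^{sp/(p-1)} f_0^{1/(p-1)}$, matching~$(a)$ and~$(b)$ exactly. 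All constants then depend only on~$n$,~$s$,~$p$, and~$\Lambda$, through~$H$ and the parameters~$\lambda$,~$\varepsilon$,~$C$ of the abstract results.

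There is no real obstacle here, since the work has been done upstream in Section~\ref{DeGsec} and in Theorem~\ref{solDeG}. The only conceptual remark worth making in the write-up is that the tail term~$\Tail_{s,p}(u_-; x_0, R)$ on the right-hand side cannot be dropped, as it genuinely records the values of~$u$ outside of~$\Omega$, where~$u$ is not assumed to be non-negative; this is exactly the way in which Kassmann's observation from~\cite{Kas11} enters the present formulation, and it propagates from the abstract Harnack statements to the concrete ones for solutions of~\eqref{eq}.
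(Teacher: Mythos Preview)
Your proposal is correct and matches the paper's approach exactly: the corollary is obtained by combining Theorem~\ref{solDeG} with Theorems~\ref{DGweakHarthm} and~\ref{DGharthm}, after checking that the additive term~$R^{(\lambda+sp)/p}d$ simplifies to~$R^{sp/(p-1)}f_0^{1/(p-1)}$ for the parameter values~$d=f_0^{1/(p-1)}$ and~$\lambda=sp/(p-1)$.
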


Similar Harnack inequalities appeared in~\cite{Kas11}, for~$p = 2$, and in~\cite{DKP14}, with a general~$p > 1$ but with~$f = 0$.

\appendix

\section{An explicit example} \label{charexapp}

It is easy to see that the characteristic function of a sufficiently smooth subset~$E$ of~$\R^n$ is contained in the fractional Sobolev space~$W^{s, p}$, provided~$sp < 1$. In this appendix we show that, in dimension~$n = 1$ and under this assumption on~$s$ and~$p$, a step function may also belong to a \emph{weak} fractional De Giorgi class~$\wDG$---but never to a \emph{strong} class~$\DG$. From this, it follows that the~$C^\alpha$ estimates of Theorem~\ref{ulocHolderthm} and the Harnack inequality of Theorem~\ref{DGharthm}---both valid for the elements of the smaller class~$\DG$---cannot be extended to~$\wDG$.

\begin{proposition} \label{chiinwDGprop}
Let~$n = 1$ and~$s p < 1$. Then,
\begin{equation} \label{chiinwDG}
\chi_{(0, +\infty)} \in \wDG((-1, 1); 0, H, 0)
\end{equation}
for some constant~$H \ge 1$. Furthermore,
\begin{equation} \label{chinotinDG}
\chi_{(0, +\infty)} \notin \DGm((-1, 1); d, H, \lambda)
\end{equation}
for every~$d, \lambda \ge 0$ and~$H \ge 1$.
\end{proposition}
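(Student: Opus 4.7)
My plan is to prove the inclusion~\eqref{chiinwDG} and the non-inclusion~\eqref{chinotinDG} separately, through direct explicit computations that exploit the simple structure of $\phi := \chi_{(0, +\infty)}$.

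For~\eqref{chiinwDG}, I would fix $x_0 \in (-1, 1)$, $0 < r < R < 1 - |x_0|$, and $k \in \R$, and verify~\eqref{wDG+-def} case by case; by symmetry it is enough to treat the $+$ sign. When $k \geq 1$ or $|x_0| \geq r$, the left-hand side is zero and the estimate is trivial. Otherwise, setting $a := x_0 + r$ and $b := r - x_0$ (so $a, b > 0$ and $a + b = 2r$), a direct computation gives
\begin{equation*}
[\chi_{(0, +\infty)}]_{W^{s, p}(B_r(x_0))}^p = \frac{2}{sp(1 - sp)} \left( a^{1 - sp} + b^{1 - sp} - (a + b)^{1 - sp} \right),
\end{equation*}
which by concavity of $t \mapsto t^{1 - sp}$ is bounded uniformly by $C_{s, p} \, r^{1 - sp}$. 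Since $(\phi - k)_+$ is $(1 - k) \chi_{(0, +\infty)}$ when $k \in [0, 1)$ and $\phi - k$ when $k < 0$ (which leaves the Gagliardo seminorm unchanged while boosting the pointwise $L^p$ density), in every admissible case the left-hand side of~\eqref{wDG+-def} is at most $C_{s, p} (1 \vee |k|)^p r^{1 - sp}$. Using the elementary fact $|B_R(x_0) \cap (0, +\infty)| = x_0 + R \geq R - r$, the first $L^p$ summand on the right-hand side is at least $\tfrac{R^{(1 - s) p}}{(R - r)^p} (1 \vee |k|)^p (R - r)$, so the whole claim reduces to the scale-free inequality
\begin{equation*}
r^{1 - sp} \leq C'_{s, p} \cdot \frac{R^{(1 - s) p}}{(R - r)^{p - 1}} \qquad \text{for every } 0 < r < R.
\end{equation*}
In the variable $\rho := R/r$ this amounts to a positive lower bound for $g(\rho) := \rho^{(1 - s) p}/(\rho - 1)^{p - 1}$ on $(1, +\infty)$, which follows because $g$ is continuous and positive on $(1, +\infty)$, blows up at $\rho = 1^+$, and behaves like $\rho^{1 - sp} \to +\infty$ at infinity by the standing hypothesis $sp < 1$. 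The $\wDGm$ membership is entirely analogous, via the symmetry $\chi_{(-\infty, 0)} = 1 - \chi_{(0, +\infty)}$ a.e.

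For~\eqref{chinotinDG}, I would fix $x_0 = 0$, $R = 2r$, and $k \in (0, 1/2)$, and inspect the purely nonlocal summand on the left-hand side of~\eqref{DG+-def} with the $-$ sign. Since $(\phi - k)_- = k \chi_{(-\infty, 0)}$ and $(\phi - k)_+ = (1 - k) \chi_{(0, +\infty)}$, this summand reduces to
\begin{equation*}
\int_{-r}^{0} k \int_{0}^{+\infty} \frac{(1 - k)^{p - 1}}{(y - x)^{1 + sp}} \, dy \, dx = \frac{k (1 - k)^{p - 1}}{sp(1 - sp)} \, r^{1 - sp} \geq c_{s, p} \, k \, r^{1 - sp},
\end{equation*}
which is \emph{linear} in $k$ as $k \to 0^+$. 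Analogous explicit computations bound the three summands on the right-hand side of~\eqref{DG+-def} collectively by $C(H, s, p, \lambda) \bigl( d^p r^{1 + \lambda} + k^p r^{1 - sp} \bigr)$, with no intermediate power of $k$. Were~\eqref{DG+-def} to hold, dividing through by $r^{1 - sp}$ would force
\begin{equation*}
c_{s, p} \, k \leq C \bigl( d^p \, r^{sp + \lambda} + k^p \bigr)
\end{equation*}
uniformly in admissible $(r, k)$. Choosing $k$ small enough that $C k^p < \tfrac{1}{2} c_{s, p} k$ (possible since $p > 1$), and subsequently, if $d > 0$, choosing $r$ small enough that $C d^p r^{sp + \lambda} < \tfrac{1}{2} c_{s, p} k$ (possible since $sp + \lambda > 0$; trivial if $d = 0$), delivers the desired contradiction.

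The main obstacle I anticipate is the bookkeeping in the first part: extracting from the Gagliardo formula a clean bound $[\chi_{(0, +\infty)}]_{W^{s, p}(B_r(x_0))}^p \leq C_{s, p} r^{1 - sp}$ uniform in $x_0$, and isolating the scale-free auxiliary inequality so that the constant $H$ in~\eqref{chiinwDG} may be chosen independently of $x_0, r, R, k$. The non-inclusion is comparatively soft, turning entirely on the linear-versus-superlinear mismatch in $k$ between the extra nonlocal term and the right-hand side of~\eqref{DG+-def}.
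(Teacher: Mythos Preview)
Your overall strategy matches the paper's: direct computation of the Gagliardo seminorm of a step function for~\eqref{chiinwDG}, and exploitation of the linear-versus-superlinear mismatch in~$k$ between the nonlocal cross term and the right-hand side for~\eqref{chinotinDG}. Your treatment of~\eqref{chinotinDG} is essentially identical to the paper's.

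There is, however, a bookkeeping slip in your argument for~\eqref{chiinwDG}. Your claimed lower bound for the~$L^p$ summand on the right-hand side, namely~$\tfrac{R^{(1-s)p}}{(R-r)^p}(1 \vee |k|)^p (R-r)$, is false when~$k \in (0,1)$ is close to~$1$: since~$(\phi-k)_+ = (1-k)\chi_{(0,+\infty)}$, one only has~$\|(\phi-k)_+\|_{L^p(B_R(x_0))}^p = (1-k)^p(x_0+R)$, which is much smaller than~$(R-r)$ as~$k \nearrow 1$. The repair is immediate: for~$k \in [0,1)$ both the left-hand side and this summand of the right-hand side carry the \emph{same} factor~$(1-k)^p$, which cancels and leaves precisely your scale-free inequality~$r^{1-sp} \le C'_{s,p}\, R^{(1-s)p}/(R-r)^{p-1}$; your analysis of the auxiliary function~$g(\rho)=\rho^{(1-s)p}/(\rho-1)^{p-1}$ then goes through. (For~$k<0$ your bounds are fine as stated.) The paper sidesteps this auxiliary inequality altogether by bounding the left-hand side more sharply---by~$C\,(r-|x_0|)_+^{1-sp}$ rather than~$C\,r^{1-sp}$---and the right-hand side from below by~$H\,(R-|x_0|)^{1-sp}$ (after cancelling the common~$k$-dependent factor); the comparison is then trivial because~$R>r$.
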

\begin{proof}
We begin by showing that~\eqref{chiinwDG} holds true. We only check that~$u := \chi_{(0, +\infty)}$ belongs to the class~$\wDGm$, as the verification of its inclusion in~$\wDGp$ is analogous.

Fix any~$x_0 \in (-1, 1)$,~$0 < r < R < 1 - |x_0|$, and~$k \in \R$. In order to check the validity of the inequality defining~$\wDGm$, we clearly can restrict ourselves to considering the case of~$k > 0$, since otherwise~$(u - k)_- \equiv 0$. For shortness, we only deal with~$k \in (0, 1]$, the case~$k > 1$ being similar. We first estimate from above the left-hand side of~\eqref{wDG+-def}:
\begin{equation} \label{LHSDeG}
\begin{aligned}
[(u - k)_-]_{W^{s, p}((x_0 - r, x_0 + r))}^p & = \int_{x_0 - r}^{x_0 + r} \int_{x_0 - r}^{x_0 + r} \frac{|(u(x) - k)_- - (u(y) - k)_-|^p}{|x - y|^{1 + s p}} \, dx dy \\
& = 2 k^p \chi_{(|x_0|, +\infty)}(r) \int_{x_0 - r}^0 \int_0^{x_0 + r} \frac{dx dy}{|x - y|^{1 + s p}} \\
& \le \frac{2 (r - |x_0|)_+^{1 - s p} k^p}{s p (1 - s p)}.
\end{aligned}
\end{equation}
In view of this, it suffices to estimate from below the right-hand side of~\eqref{wDG+-def} when~$r > |x_0|$. In this case, also~$R > |x_0|$ and therefore such right-hand side is larger than
\begin{align*}
H \frac{R^{(1 - s) p}}{(R - r)^p} \| (u - k)_- \|_{L^p(x_0 - R, x_0 + R)}^p = H \frac{R^{(1 - s) p}}{(R - r)^p} k^p \int_{x_0 - R}^0 dx \ge H (R - |x_0|)^{1 - s p} k^p.
\end{align*}
As~$R > r$, the latter quantity controls the one appearing on the last line of~\eqref{LHSDeG}, provided~$H$ is sufficiently large (in dependence of~$s$ and~$p$ only). Consequently,~$u$ belongs to the class~$\wDGm((-1, 1); 0, H, 0)$.

We now turn our attention to~\eqref{chinotinDG}. We point out that, arguing by contradiction, its validity could be inferred from Theorem~\ref{DGweakHarthm}. Nevertheless, we present here a proof of it based on a direct computation, for we show that inequality~\eqref{DG+-def} does not hold when~$x_0 = 0$ and~$R = 2 r$, with~$k, r > 0$ suitably small. Indeed, under these assumptions the left-hand side of~\eqref{DG+-def} is larger than
\begin{align*}
\int_{-r}^{r} (u(x) - k)_- \left\{ \int_\R \frac{ (u(y) - k)_+^{p - 1}}{|x - y|^{1 + s p}} \, dy \right\} dx & \ge \int_{- r}^{0} k \left\{ \int_{0}^{x + r} \frac{(1 - k)^{p - 1}}{(y - x)^{1 + s p}} \, dy \right\} dx \\
& = \frac{r^{1 - s p} k (1 - k)^{p - 1}}{1 - s p}.
\end{align*}
On the other hand, it is easy to check that the right-hand side of~\eqref{DG+-def} is bounded above by~$C H (r^{1 + \lambda} d^p + r^{1 - s p} k^p)$, for some constant~$C \ge 1$ depending only on~$s$ and~$p$. By taking~$r$ and~$k$ smaller and smaller (but positive), it follows that the latter quantity cannot control the one displayed above, no matter how large~$H$ is. Hence,~\eqref{chinotinDG} holds true.
\end{proof}

\end{document}